\newtheorem{theorem}{Theorem}
\newtheorem{lemma}{Lemma}
\newtheorem{proposition}{Proposition}
\newtheorem{remark}{Remark}
\newcommand{\norm}[1]{{\left\|{#1}\right\|}}
\newcommand{\fact}[1]{#1\mathpunct{}!}
\newcounter{reh}
\newcounter{rek}
\begin{document}

\begin{center}
{\large {\bf Reproducing kernels based schemes for nonparametric regression}}\\
\vskip 1cm Bilel Bousselmi$^a$, Jean-Fran\c{c}ois Dupuy $^a$   and Abderrazek Karoui$^b$ \footnote[1]{Corresponding author,
Email address: Abderrazek.Karoui@fsb.rnu.tn\\
This work was supported in part by the  
 DGRST  research grant  UR 13ES47 and the PHC-Utique research project 20G1503.}

\vskip 0.5cm {\small
\noindent
$^a$ University of Rennes, INSA Rennes, CNRS, IRMAR - UMR 6625, F-35000 Rennes, France.\\
\noindent $^b$ University of Carthage,
Department of Mathematics, Faculty of Sciences of Bizerte, Jarzouna 7021,  Tunisia.}\\

\end{center}

\noindent{\bf Abstract}--- In  this work, we develop and study an empirical projection operator  scheme for solving  nonparametric regression problems. This scheme is based on an  approximate projection of the regression function over a suitable reproducing kernel Hilbert space (RKHS). The RKHS considered in this paper are generated by the Mercer kernels given by  the Legendre Christoffel-Darboux and convolution Sinc kernels. We provide error and convergence analysis of the proposed scheme under the assumption that the regression function belongs to some suitable functional spaces. We also consider the popular  RKHS regularized least square minimization  for nonparametric regression. In particular, we check the numerical stability of this second scheme and we provide its convergence rate in the special case of the Sinc kernel. Finally, we illustrate the proposed methods by various numerical simulations.\\

\noindent {\bf  Keywords:} Nonparametric regression, reproducing kernel Hilbert space, empirical projection, Legendre Christoffel  kernel, Sinc kernel, regularized mean square minimization.\\

\section{Introduction} 

Given a complete metric space $\mathcal X$ and an output space $\mathcal Y,$ one main issue of learning theory is to develop algorithms that take a training set $\{(X_i, Y_i), 1\leq i\leq n\}$ in $\mathcal X\times \mathcal Y$ and return a function $f$ such that for $x\in \mathcal X,$ $f(x)$ is a good estimate (or prediction) of the corresponding output $y:=y(x)$. Observations $(X_i,Y_i)$ are assumed to be drawn from a joint probability measure $\rho$ on $\mathcal X \times \mathcal Y.$ In the special case where $\mathcal Y$ is a measurable  subset of $\mathbb R,$ this learning problem is known as a nonparametric regression problem. In this work, we shall restrict  ourselves to this case. Following the standard notations (see for example \cite{Smale1, Smale2}) and letting $\rho_X$ denote the marginal probability measure over $\mathcal X,$ the true regression function associated with this regression problem is given by 

\begin{equation*}
f(x)= \int_{\mathcal Y} y d \rho(y|x),\quad x\in \mathcal X,
\end{equation*}
where $d \rho(y|x)$ is the conditional distribution of $Y$ given $x.$ It is well known (see for example \cite{Smale1, Tarres, Vito}) that $f_\rho$ minimizes the mean square error $\int_{\mathcal X\times \mathcal Y} (y-f(x))^2 d\rho$. In practice, the outputs $Y_i$ are noised observations of the true regression function, which we will simply denote, from now on, by $f$. Therefore, we consider the following nonparametric regression model:
\begin{equation*}
Y_i = f(X_i) +\eta_i,\quad 1\leq i\leq n,
\end{equation*}
where $(X_i)_{1\leq i \leq n}$ are random design variables (or inputs) with distribution $\rho_X$ and the noise terms $(\eta_i)_{1\leq i \leq n}$ are i.i.d. real-valued random variables with mean zero. For simplicity, we will assume that the $X_i$ are uniformly distributed on the interval $I=[-1, 1]$. The problem is to estimate the function $f : I \rightarrow \mathbb R$, based on observations $(X_i, Y_i)_{1\leq i \leq n}$. In this problem, an important issue is to achieve a trade-off between minimization of the empirical  regression  error $\frac{1}{n} \sum_{i=1}^n \big(f(x_i)-y_i\big)^2 $ and data overfitting, which generally yields  large regression errors. Roughly speaking, an algorithm that  overfits data requires too many inputs and makes predictions that are largely based on  the noise, rather than on the data corresponding  to the regression function itself. Tikhonov regularized least-square algorithm is a  popular learning algorithm that overcomes the overfitting problem and provides a satisfactory approximation to the regression function. More precisely, for an appropriate choice of a Hilbert space $\mathcal H$ and a given regularization parameter $\lambda >0$, Tikhonov regularized least-square estimator of $f$ is given by:
\begin{equation*}
f^\lambda=\arg\min_{f\in \mathcal H} \left\{\frac{1}{n} \sum_{i=1}^n \Big(f(X_i)-Y_i \Big)^2 +\lambda \|f\|_{\mathcal H}^2\right \},
\end{equation*}
where $\|\cdot\|_{\mathcal H}$ is the usual norm of $\mathcal H.$ In practice, the regularization parameter $\lambda$ is chosen in such a way that the resulting  mean square error is small. Moreover, to prevent data overfitting, $\lambda$ must not be too small. A variety of procedures have been proposed for choosing the appropriate value of $\lambda$, such as (generalized) cross-validation, use of a validation set or derivation of an explicit optimal value of $\lambda$ according to some optimality criterion associated with the regression error (we refer the interested reader to \cite{Smale1, Tarres, Vito, Yuan}, for example, for more details on these procedures).

Our aim in this work is to develop some approximation schemes that provide convenient and stable estimates of $f$, provided that this later satisfies some smoothness property. The estimates of $f$ investigated here are supposed to belong to a reproducing kernel Hilbert space (RKHS) generated by a Mercer kernel $K(t,x)$ defined on $\mathcal X\times \mathcal X,$ for some compact set $\mathcal X$ of $\mathbb R$. Note that such a kernel is a real valued, continuous, symmetric and positive semi-definite function. The error analysis of an RKHS regression approximation scheme depends heavily of the special spectral properties of the integral operator associated with the reproducing kernel. In this work, we restrict ourselves to the Sinc and Legendre Christoffel-Darboux (Legendre for short) kernels. For a real positive number $c$ (called bandwidth), the Sinc kernel is defined by
\begin{equation}\label{sinc.kernel}
K_c(x,y)=\frac{\sin(c(x-y))}{\pi (x-y)},\quad x,y\in I.
\end{equation}
For a positive integer $N,$ Legendre kernel is given by 
\begin{equation*}
K_N(x,y)=\frac{N+1}{\sqrt{2N+1}\sqrt{2N+3}}\frac{\widetilde P_{N+1}(x)\widetilde P_N(y)-\widetilde P_N(x)\widetilde P_{N+1}(y)}{x-y},\quad x,y\in I,
\end{equation*}
where the $\widetilde P_k$ are the usual orthonormal Legendre polynomials of degree $k$, $k=0,1,\ldots$ More details on these two Mercer kernels are given in section \ref{maths.prelim}, along with some useful spectral properties of the associated integral operators.

Before giving our plan, let us highlight our main findings. First, we show that the empirical projection operators associated to the Legendre and Sinc kernels provide stable and fairly accurate approximations to the true regression function $f.$  To establish this result, we need to assume some regularity on $f$. Precisely, we assume that $f$ belongs to a fractional Sobolev space $H^s(I)$ for some $s>0$, or that $f$ is the restriction to $I$ of a $c-$bandlimited function $\widetilde f,$ for some $c>0$ (that is, $\widetilde f$ belongs to the Paley-Wiener space $\mathcal B_c,$ defined as the set of functions of $L^2(\mathbb R)$ with Fourier transforms supported on  the interval $[-c,c]$). More precisely, we define our estimator of the regression function $f$, based on Legendre kernel, by:
\begin{equation*}
\widehat f_{N,n}(x)= \frac{2}{n}\sum_{i=1}^n Y_i \sum_{k=0}^N \left[\widetilde{P}_k(X_i)\widetilde{P}_k(x)\right]=\frac{2}{n}\sum_{i=1}^n Y_i K_N(X_i,x),\quad x\in I.
\end{equation*}
By using Pinelis concentration inequality and some spectral approximation properties of Legendre kernel, we prove that if $f\in H^s(I)$ for some $s>0,$ then, for any $0<\delta <1$, there exists a uniform constant $c_1$ such that the following holds with probability at least $1-\delta$:
\begin{equation*}
\|f -\widehat f_{N,n}\|_{L^2(I)} \leq \frac{M_{f,N}}{\sqrt{n}}\sqrt{\log\Big(\frac{2}{\delta}\Big)}+ c_1 N^{-s} \|f\|_{H^s},
\end{equation*}
where $M_{f,N}= 2(N+1) (\|f\|_\infty+\varepsilon)+\sqrt{2} \|f\|_{L^2(I)}$ and $\varepsilon= \max_i |\eta_i|$. Moreover, under the assumption that for some $c>0,$ $f$ is the restriction to $I$ of a $c-$bandlimited function $\widetilde f,$ we prove that there exists a uniform constant $c_2$ such that  for any $0<\delta <1$, the following holds with probability at least $1-\delta$:
\begin{equation*}
\|f -\widehat f_{N,n}\|_{L^2(I)} \leq \frac{M_{f,N}}{\sqrt{n}}\sqrt{\log\Big(\frac{2}{\delta}\Big)}+ c_2 e^{-(N+2) \log\Big(\frac{2N+2}{ec}\Big)}\|\widetilde f\|_{L^2(\mathbb R)}.
\end{equation*}
For a positive real number $c>0,$ we also consider a second  estimator of the regression function $f$, based on the Sinc kernel $K_c$ with bandwidth $c$. This estimator is given by: 
\begin{equation*}
\widehat f_{c,n}(x)= \frac{2}{n}\sum_{i=1}^n Y_i  K_c(X_i,x),
\end{equation*}
where $K_c(\cdot, \cdot)$ is given by \eqref{sinc.kernel}. Under the assumption that $f$ belongs to some weighted Sobolev space $\widetilde H^s,\, s>0,$ we prove that for any $0<\delta <1$, the following holds with probability at least $1-\delta$:
\begin{equation*}
\|f- \widehat f_{c,n}\|_{L^2(I)} \leq   \frac{M_{f,c}}{\sqrt{n}}\sqrt{\log\Big(\frac{2}{\delta}\Big)} + \frac{7}{\sqrt{6}} \left(\frac{e^2}{6}\right)^{-[c/3]}\|f\|_{L^2(I)}+\left[\frac{c}{3}\right]^{-s} \|f\|_{\widetilde H^s},
\end{equation*}
where $M_{f,c}=\sqrt{\frac{2c}{\pi}} \big(2\|f\|_\infty+2\varepsilon+\sqrt{2} \|f\|_{L^2(I)}\big)$.

In the second part of this work, we briefly describe Tikhonov regularized least-square minimization algorithm for the stable approximation of the regression function $f$. The algorithm requires the inversion of a random Gram-matrix, and we show that besides preventing data overfitting, the regularization parameter $\lambda>0$ is crucial to obtain a stable solution. In particular, we check that  the $2-$condition number of  the associated regularized  random Gram-matrix is of order $O(1 \slash \lambda).$  A special interest is given to the Sinc kernel. For this case, we give a fairly precise $2-$condition number of the associated  random Gram-matrix. Moreover, we check that if the regression function $f$ is the restriction to $I$ of a $c-$bandlimited function, then ${\displaystyle  \| \widehat f^\lambda_{c,n}- f\| = O (\sqrt{c} \slash n^{1/4})}$ with high probability, where $\widehat f^\lambda_{c,n}$ is the approximation of $f$ obtained by solving the regularized least square minimization problem with the optimal theoretical admissible value of $\lambda$.

The paper is organized as follows. In section \ref{maths.prelim}, we give some mathematical preliminaries that will be useful in this work. In section \ref{projection.operators}, we define our projection estimators based on Legendre and Sinc kernels and we provide their error analysis. These error analyses are performed under the assumption that the regression function $f$ belongs to some fractional order Sobolev space over $I$ or $f$ is the restriction to $I$ of a bandlimited function. In section \ref{Tikhonov}, we briefly describe Tikhonov regularized least-square minimization algorithm for the approximation of the regression function. We check the stability and convergence rate of the algorithm in the case where the reproducing kernel is given by the Sinc kernel, and the regression function is the restriction to $I$ of a bandlimited function. In section 5, we provide various numerical simulations to illustrate our results.

\section{Mathematical preliminaries}\label{maths.prelim}

In this section, we first provide some preliminaries about Legendre polynomials and Legendre spectral approximations. Then we briefly recall the definition and main properties of a reproducing kernel and its associated RKHS. Specific attention is given to the Sinc kernel and to some spectral properties of the associated integral operator. \\ In the following, unless otherwise stated, the norm $\|\cdot\|_{L^2(I)}$ and usual inner product $<\cdot,\cdot>_{L^2(I)}$ in $L^2(I)$ are denoted by $\|\cdot\|$ and $<\cdot,\cdot>$ respectively. Also,  we denote by $\mathbb E(\cdot)=\mathbb E_{X,\eta}(\cdot)$ and $\mathbb E_X(\cdot)$ the expectations with respect to the r.v. $(X_i,\eta_i)$ and $X_i$ respectively. Finally, for any real number $x$, $[x]$ denotes the integer part of $x$.

\subsection{Legendre polynomials and basis}

For any integer $n\geq 0,$ the normalized  Legendre polynomial $\widetilde P_n$ of degree $n$ is given by Rodrigues formula ${ \widetilde P_{n}(x)=\sqrt{n+\frac{1}{2}}\frac{1}{2^n\,n!} \frac{d^n}{dx^n}\left(\big(x^2-1\big)^n\right)}$, and is such that:
\begin{equation}\label{Legendre2}
\sup_{x\in I} |\widetilde P_{n}(x)| = |\widetilde P_{n}(1)|=\sqrt{{n+\frac{1}{2}}}.
\end{equation}
Let $N$ be a positive integer. The Christoffel-Darboux kernel associated with Legendre polynomials is given by:
\begin{equation}
\label{Christoffel}
K_N(x,y)=\sum_{j=0}^N \widetilde P_j(x) \widetilde P_j(y)= 
\left\{ \begin{array}{ll} \frac{N+1}{\sqrt{2N+1}\sqrt{2N+3}}\frac{\widetilde P_{N+1}(x) \widetilde P_N(y)-\widetilde P_N(x) \widetilde P_{N+1}(y)}{x-y},&\quad x\neq y\\
\frac{N+1}{\sqrt{2N+1}\sqrt{2N+3}}\Big(\widetilde P'_{N+1}(x)\widetilde P_N(x)-\widetilde P'_N(x)\widetilde P_{N+1}(x)\Big),&\quad x=y.\end{array}\right.
\end{equation}
We refer the reader to \cite{NIST} for more details on Legendre polynomials.

The collection $(\widetilde{P_n})_{n\geq 0}$ constitutes an orthonormal basis of the Hilbert space $L^2(I)$ endowed with its usual inner product. This basis is well suited to the spectral approximation of functions belonging to Sobolev spaces $H^s(I)$, where $s>0$ is a positive real number (also known as fractional Sobolev spaces). Such spaces can be defined in several ways. A first approach defines $H^{s}(I)$ as an intermediate space between the classical Sobolev spaces $H^{[s]}(I)$ and $H^{[s]+1}(I)$, via an interpolation technique.  Another approach is to define $H^{s}(I)$ as the set of functions $u$ such that
\begin{equation*}
\| u\|_{H^{s}(I)}=\Vert u\Vert _{H^{[s]}(I)}+ \left[ \int_{I\times I}\frac{\mid v(x)-v(y)\mid ^{2}}{\mid x-y\mid ^{(1+2\sigma )}} dxdy\right] ^{\frac{1}{2}}<\infty
\end{equation*}
where $v=\frac{d^{[s]}u}{dx^{[s]}}$ and $\sigma :=s-[s]$.

Let $\pi_N$ denote the orthogonal projection on the finite-dimensional subspace of $L^2(I)$ spanned by $(\widetilde P_0, \widetilde P_1,\ldots,\widetilde P_N)$. That is, for $f \in L^2(I)$:
\begin{equation*}
\pi_N(f) = \sum_{k=0}^{N} <f,\widetilde P_k>_{L^2(I)}\widetilde P_k, \quad \mbox{ with } \quad <f,\widetilde P_k>_{L^2(I)}=\int_I f(x) \widetilde P_k(x)\, dx.
\end{equation*}
Then it is known (see \cite{Canuto}, for example) that there exists a uniform constant $C>0$ such that:
\begin{equation*}
\|u-\pi_N(u)\|_{L^2(I)} \leq C N^{-s} \|u\|_{H^s(I)},\qquad\forall\,\, u\in H^s(I).
\end{equation*}
Moreover, it was recently shown that Legendre polynomials  are also well suited to the approximation of $c$-bandlimited functions, where $c>0$ is a positive real number. The space $\mathcal B_c$ of $c$-bandlimited functions is defined as: 
\begin{equation}\label{Bc}
\mathcal B_c=\{ f\in L^2(\mathbb R),\,\, \mbox{supp}(\mathcal F f)\in [-c,c]\},
\end{equation}
where $\mathcal F f$ denotes the usual Fourier transform of $f\in L^2(\mathbb R)$. In \cite{JKS}, it is shown that there exists a uniform constant $C>0$ such that  for any integer $N \geq \max(3,ec/2)$ and any $ f \in \mathcal{B}_c$, we have:
\begin{equation*}
\norm{f-\pi_Nf}_{L^2(I)} \leq C  \left(\dfrac{ec}{2N+2}\right)^{N+2} \norm{f}_{L^{2}(\mathbb{R})}.
\end{equation*}

\subsection{Reproducing kernels and associated RKHS}

In this section, we briefly recall the definition and main properties of a reproducing kernel and its associated RKHS. Special attention is given to the Sinc kernel and some of the spectral properties of its associated integral operator.

Let $\mathcal X$ be a measurable set of $\mathbb R$. Then a complex-valued function $K(\cdot,\cdot)$ defined on $\mathcal X\times \mathcal X$ is said to be a reproducing kernel of a Hilbert space $\mathcal H_K$ endowed with an inner product $<\cdot,\cdot>_K$, if 
\begin{equation*}
K_x(\cdot)= K(\cdot, x) \in \mathcal H_K,\quad \forall\, x\in\mathcal X\qquad\mbox{and}\qquad  f(x)=<f, K_x>_K,\quad \forall \, f\in \mathcal H_K. 
\end{equation*}
Such a Hilbert space $\mathcal H_K$ is called a reproducing kernel Hilbert space (RKHS). Moreover, from Riesz representation theorem, for an arbitrary set $\mathcal X$ of $\mathbb R$, the  Hilbert space of real-valued functions on $\mathcal X$ is a RKHS whenever the evaluation linear functional $L_x: f\rightarrow f(x)$ is continuous on $\mathcal H$ for every $x\in \mathcal X$. It is also known that a kernel $K(\cdot,\cdot)$ is a reproducing kernel if and only if it is Hermitian and positive definite, that is:
\begin{equation*}
\sum_{i,j=1}^n c_i \overline{c_j} K(x_i,x_j)\geq 0,\quad \mbox{for any } n\in \mathbb N,\, x_1,\ldots x_n\in \mathcal X,\,\, c_1,\ldots,c_n\in \mathbb C.
\end{equation*}

Next, let $\mathcal X$ be a compact set of $\mathbb R$. Then, a real-valued kernel $K(\cdot,\cdot)$ defined on $\mathcal X\times \mathcal X$ is said to be a Mercer's kernel if it is continuous and positive semi-definite. Moreover, let $\mu$ be a positive measure on $\mathcal X,$  $K(\cdot,\cdot)\in L^2(\mathcal X\times \mathcal X, d\mu\otimes d\mu)$ and $(\varphi_n)_{n\geq 0},  (\lambda_n)_{n\geq 0}$ denote the orthonormal eigenfunctions and associated eigenvalues of the associated Hilbert-Schmidt (thus compact) operator $T_K \varphi(x) = \int_X K(x,y)\varphi_n(y)\, d\mu(y)= \lambda_n \,\varphi_n(x)$, $x\in \mathcal X$. Then, by Mercer's Theorem, we have: $ K(x,y)=\sum_{n=0}^\infty \lambda_n \,\varphi_n(x) \varphi_n(y)$ for $x,y\in \mathcal X$. 

The previous sum converges  uniformly over the compact set $\mathcal X\times \mathcal X.$ Moreover, in this case, the RKHS  $\mathcal H_K$ associated with Mercer's kernel $K(\cdot,\cdot)$ is given by:
\begin{equation*}
\mathcal H_K=\left\{ f\in L^2(\mathcal X,d\mu),\,\, f=\sum_{n\geq 0} a_n(f) \varphi_n,\,\, \sum_{n\geq 0} \frac{|a_n(f)|^2}{\lambda_n}<+\infty\right\}.
\end{equation*}
The associated inner product is given by 
\begin{equation*}
<f,g>_K = \sum_{n\geq 0} \frac{a_n(f) b_n(g)}{\lambda_n},\quad  \mbox{ if } f=\sum_{n\geq 0} a_n(f)\varphi_n,\,\, g=\sum_{n\geq 0} b_n(g)\varphi_n.
\end{equation*}
For  the Sinc convolution kernel, defined for a fixed real number  $c>0$ by $K_c(x,y)= \frac{\sin(c(x-y))}{\pi (x-y)}$ $(x,y \in \mathbb R$), we note that $K_c(x,y)=\frac{c}{\pi} \mathcal F \mu_c(x-y),$ where $\mu_c$ is the uniform probability measure $\mu_c(x)=\frac{1}{2c} \mathbf 1_{[-c,c]}(x)$, $x\in \mathbb R$. Hence, by Bochner's theorem, the Sinc kernel  $K_c(\cdot,\cdot)$ is a reproducing kernel. It is well known (see for example \cite{Slepian1}) that when the Sinc kernel is defined on $\mathbb R^2,$  the associated  RKHS is the Paley-Wiener space of $c-$bandlimited functions $ \mathcal B_c$, given by \eqref{Bc}. We should mention that when the Sinc kernel is restricted to the square $I^2=[-1,1]^2,$ one gets a reproducing kernel which is also a Mercer's kernel, given by: 
\begin{equation*}
K_c(x,y)= \frac{\sin(c(x-y))}{\pi (x-y)},\quad x,y \in I=[-1,1].
\end{equation*}
In this case, we have 
\begin{equation}\label{Sinc_kernel2}
K_c(x,y)= \frac{\sin(c(x-y))}{\pi (x-y)}=\sum_{n=0}^\infty \lambda_n(c) \psi_{n,c}(x) \psi_{n,c}(y),\quad\forall\,  x,y \in I=[-1,1].
\end{equation}
Here, the $\psi_{n,c}$ and  $\lambda_n(c)$ are the eigenfunctions  and associated positive eigenvalues of the Hilbert-Schmidt operator ${\displaystyle \mathcal Q_c}$ defined on $L^2(-1,1)$ by ${ \mathcal Q_c f(x)=\int_{-1}^1 \frac{\sin(c(x-y))}{\pi(x-y)} f(y)\, dy}$. That is, for any integer $n\geq 0$, 
\begin{equation*}
\int_{-1}^1 \frac{\sin(c(x-y))}{\pi(x-y)} \psi_{n,c}(y)\, dy =\lambda_n(c) \psi_{n,c}(x),\quad x\in I.
\end{equation*}
In the literature, eigenfunctions $\psi_{n,c}$ are known as the prolate spheroidal wave functions (PSWFs). The eigenvalues $\lambda_n(c),\, n\geq 0$ are arranged in the decreasing order. They are known to be simple and to satisfy 
\begin{equation*}
1> \lambda_0(c)> \lambda_1(c)>\cdots >\lambda_n(c)>\cdots
\end{equation*}
The theory and computation of these PSWFs and their associated eigenvalues $\lambda_n(c)$ are due to the pioneering works of  D. Slepian and his collaborators H. Landau and H. Pollack, see for example  \cite{Slepian1}. It is important to note that the sequence of positive eigenvalues $\lambda_n(c)$ has a super-exponential decay rate to zero. Moreover, it was recently shown (see \cite{Bonami-Karoui2}) that for any $1\leq a < 4\slash e$, there exists $N_{c,a}\in \mathbb N$ such that 
\begin{equation*}
 \lambda_n(c) \leq e^{-2n \log\left(\frac{an}{c}\right)},\quad\forall\, \, n\geq N_{c,a}.
\end{equation*}
The constant $a=4\slash e$ is optimal. Finally, in \cite{BJK}, authors provide the following useful non-asymptotic behaviour and decay rate. For any $c>0,$ we have:
\begin{equation}
\label{decay2}
\lambda_n(c)\geq 1-\frac{7}{\sqrt{c}} \frac{(2c)^n}{\fact{n}}e^{-c},\quad \mbox{ for } 0\leq n <\frac{c}{2.7},
\end{equation}
and
\begin{equation}
\label{decay3}
\lambda_n(c)\leq \exp\left(-(2n+1)\log\Big(\frac{2}{ec}(n+1)\Big)\right),\quad\forall\, n\geq \max\left(\frac{ec}{2},2\right).
\end{equation}

\section{Nonparametric  regression by empirical projection operators}\label{projection.operators}

Let $(X_i, Y_i)_{1\leq i \leq n}$ be independent observations of the nonparametric regression model
\begin{eqnarray}\label{Nregression1}
Y_i = f(X_i) + \eta_i, \quad i=1,\ldots,n.
\end{eqnarray}
The random design variables $(X_i)_{1\leq i \leq n}$ are assumed to be independent and uniformly distributed on the interval $I=[-1, 1]$, the noise terms $(\eta_i)_{1\leq i \leq n}$ are i.i.d. real-valued random variables with mean zero, and the two sequences are independent. The problem is to estimate the function $f : I \rightarrow \mathbb R$ from the observations $(X_i, Y_i)_{1\leq i \leq n}$. We assume that $f(\cdot)$ lies in a subspace of the Hilbert space $L^2(I)$.

In this section, we consider two  reproducing kernels, namely the Legendre  Christoffel-Darboux and Sinc kernels. We provide error analysis for the proposed nonparametric regression schemes when  $f$ belongs to the Sobolev space $H^s(I)$, $s>0$, or $f$ is the restriction to $I$ of a bandlimited function.

In what follows, we let $N$ be a positive  integer and $K_N(x,y)=\sum_{k=0}^N\widetilde{P}_k(x) \widetilde{P}_k(y)$ be the Legendre  Christoffel-Darboux kernel given by \eqref{Christoffel}. Let $\pi_N$ be the projection operator on the finite-dimensional subspace of $L^2(I)$ spanned by $(\widetilde P_0, \widetilde P_1,\ldots,\widetilde P_N)$, that is:
\begin{equation*}
\pi_N(f)(x)= <f, K_N(x,\cdot)>=\sum \limits_{k=0}^N <f,\widetilde{P}_k>\widetilde{P}_k(x),\quad x\in I.
\end{equation*}
Based on this empirical projection operator, we define the regression estimator of $f$ as:
\begin{equation*}
\widehat f_{N,n}(x)= \frac{2}{n} \sum_{i=1}^n Y_i K_N(X_i,x),\quad x\in I,
\end{equation*}
for any positive integer $n$. In Theorem \ref{Thm1} below, we show that Legendre kernel is well adapted for nonparametric regression of functions belonging to the Sobolev space $H^s(I)$, $s>0$, or to the Paley-Wiener space $\mathcal B_c$  of $c-$bandlimited functions, defined by \eqref{Bc}. The proof is based on the Hilbert space-valued Pinelis concentration inequality (see \cite{Rosasco, Pinelis}). We recall this result, which plays a central role in the study of the quality of approximation by the proposed empirical projection operators:
\vspace{.2cm}

\noindent \textbf{Pinelis inequality} 
 Let $\xi_1,..., \xi_n$ be independent random variables with values in a separable Hilbert space $\mathcal H$  with norm $\|\cdot\|_{\mathcal H}$. Assume that  $\mathbb E(\xi_i)=0$ and $\|\xi_i\|_{\mathcal H}\leq C$ for every $ i=1,...,n$. Then the following holds for any $\varepsilon>0$: 
\begin{equation*}
 \mathbb{P}\left(\frac{1}{n}\Big\| \sum_{i=1}^{n} \xi_i\Big\|_{\mathcal H}\leq \varepsilon\right) \geq 1-2 e^{-\frac{n\varepsilon^2}{2C^2}}.
\end{equation*}
We are now in position to state our first result:
\begin{theorem}\label{Thm1} 
Under the above notations and hypotheses,  we have  
\begin{equation}
\label{Estimation1}
\mathbb E \big(\widehat f_{N,n}(x)\big)= \pi_N(f)(x),\quad x\in I.
\end{equation}
Moreover,
\begin{itemize}
\item if $f$ is a bounded function belonging to $H^s(I)$ for some $s>0$, then for any $0<\delta <1,$ the following holds with probability at least $1-\delta$:
\begin{equation}
\label{Estimation2}
\| f-\widehat f_{N,n}\|\leq \frac{M_{f,N}}{\sqrt{n}}\sqrt{\log\Big(\frac{2}{\delta}\Big)}+c_1 N^{-s}\|f\|_{H^s},
\end{equation}
where $M_{f,N}= 2(N+1) (\|f\|_\infty+\varepsilon)+\sqrt{2} \|f\|$ and $\varepsilon= \max_i |\eta_i|$.
\item if $f$ is the restriction to $I$ of a function $\widetilde f \in \mathcal B_c$ (the space of $c-$bandlimited functions for some $c>0$), then for any integer $N \geq ec \slash 2$ and $\delta >0$, the following holds with probability at least $1-\delta$:
\begin{equation}
\label{Estimation22}
\| f-\widehat f_{N,n}\|\leq\frac{M_{f,N}}{\sqrt{n}}\sqrt{\log\Big(\frac{2}{\delta}\Big)}+c_2 e^{-(N+2) \log\Big(\frac{2N+2}{ec}\Big)}\|\widetilde f\|_{L^2(\mathbb R)}.
\end{equation}
for some uniform constant  $c_2$.
\end{itemize}
\end{theorem}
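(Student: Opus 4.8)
The plan is to split the error $\|f-\widehat f_{N,n}\|$ into a stochastic (variance) term and a deterministic (bias) term via the triangle inequality, using the identity \eqref{Estimation1}:
\[
\|f-\widehat f_{N,n}\| \leq \|\widehat f_{N,n} - \pi_N(f)\| + \|f - \pi_N(f)\|.
\]
The second term is the standard Legendre approximation error, controlled directly by the bounds quoted in Section~\ref{maths.prelim}: by $CN^{-s}\|f\|_{H^s}$ when $f\in H^s(I)$, and by $C\big(\frac{ec}{2N+2}\big)^{N+2}\|\widetilde f\|_{L^2(\mathbb R)} = C\,e^{-(N+2)\log(\frac{2N+2}{ec})}\|\widetilde f\|_{L^2(\mathbb R)}$ when $f$ is the restriction of $\widetilde f\in\mathcal B_c$ (valid for $N\geq ec/2$). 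The first term is where the probabilistic estimate enters.

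First I would establish \eqref{Estimation1}: since $\mathbb E(\eta_i)=0$ and the $X_i$ are independent of the $\eta_i$ and uniform on $I$, we have $\mathbb E_{X,\eta}\big(\frac{2}{n}\sum_i Y_i K_N(X_i,x)\big) = 2\,\mathbb E_X\big(f(X_1)K_N(X_1,x)\big) = 2\cdot\frac12\int_I f(y)K_N(y,x)\,dy = \pi_N(f)(x)$, using that the uniform density on $[-1,1]$ is $1/2$ and the reproducing-type identity $\pi_N(f)(x) = \langle f, K_N(x,\cdot)\rangle$. Next, to handle $\|\widehat f_{N,n}-\pi_N(f)\|$, I would set $\xi_i = 2Y_i K_N(X_i,\cdot) - \pi_N(f) \in L^2(I)$, so that $\frac{1}{n}\sum_i\xi_i = \widehat f_{N,n}-\pi_N(f)$, the $\xi_i$ are i.i.d., and $\mathbb E(\xi_i)=0$ by the computation just made. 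To apply Pinelis inequality in $\mathcal H = L^2(I)$ I need a uniform almost-sure bound $\|\xi_i\|\leq C$. Here I would use that $\|K_N(X_i,\cdot)\|^2 = \langle K_N(X_i,\cdot),K_N(X_i,\cdot)\rangle = K_N(X_i,X_i) = \sum_{k=0}^N \widetilde P_k(X_i)^2 \leq \sum_{k=0}^N (k+\tfrac12) = (N+1)^2/2$ by \eqref{Legendre2}, hence $\|K_N(X_i,\cdot)\|\leq (N+1)/\sqrt2$; combined with $|Y_i|\leq\|f\|_\infty+\varepsilon$ and $\|\pi_N(f)\|\leq\|f\|$, the triangle inequality gives $\|\xi_i\|\leq 2(\|f\|_\infty+\varepsilon)\cdot(N+1)/\sqrt2 + \|f\| = \frac{1}{\sqrt2}M_{f,N} =: C$. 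Then Pinelis gives, for any $t>0$, $\mathbb P\big(\|\widehat f_{N,n}-\pi_N(f)\|\leq t\big)\geq 1 - 2e^{-nt^2/(2C^2)}$; choosing $t = \frac{C\sqrt2}{\sqrt n}\sqrt{\log(2/\delta)} = \frac{M_{f,N}}{\sqrt n}\sqrt{\log(2/\delta)}$ makes the failure probability equal $\delta$.

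Combining the two pieces on the event of probability at least $1-\delta$ yields \eqref{Estimation2} and \eqref{Estimation22} with $c_1 = C$ (the Sobolev projection constant) and $c_2 = C$ (the bandlimited projection constant from \cite{JKS}). The main obstacle — really the only nontrivial point — is getting the sharp uniform bound on $\|K_N(X_i,\cdot)\|$; everything hinges on recognizing $\|K_N(x,\cdot)\|^2 = K_N(x,x)$ (the Christoffel function) and bounding it by the pointwise bounds \eqref{Legendre2} on the Legendre polynomials, which gives the factor $N+1$ appearing in $M_{f,N}$. One should also be slightly careful that $\varepsilon=\max_i|\eta_i|$ is itself random, so strictly the bound $\|\xi_i\|\leq C$ holds on the (probability-one-for-bounded-noise, or conditionally-on-$\varepsilon$) event where all $|\eta_i|\leq\varepsilon$; conditioning on the noise and applying Pinelis to the randomness in $X$ alone, or simply treating $\varepsilon$ as the realized maximum, resolves this without changing the statement.
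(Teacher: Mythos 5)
Your proposal is correct and follows essentially the same route as the paper: the same bias--variance decomposition via \eqref{Estimation1}, the same centered variables $\xi_i$, Pinelis' inequality in $L^2(I)$, and the same Legendre spectral approximation bounds for the two regularity classes. The only (harmless) difference is that you bound $\|\xi_i\|$ by the triangle inequality together with $\|K_N(x,\cdot)\|^2=K_N(x,x)\leq (N+1)^2/2$, which in fact recovers the constant $M_{f,N}$ exactly, whereas the paper expands $\|\xi_i\|^2$ by Parseval and the inequality $(u+v)^2\leq 2u^2+2v^2$; your closing remark on the randomness of $\varepsilon=\max_i|\eta_i|$ flags a subtlety the paper silently ignores as well.
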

\begin{proof}
First, we note that:
\begin{equation*}
\mathbb E_X\left( f(X_i) \widetilde P_k(X_i)\right)= \frac{1}{2} \int_I f(y) \widetilde P_k(y)\, dy=\frac{1}{2} <f,\widetilde P_k>,\quad 1\leq i\leq n.
\end{equation*}
By independence of $X_i$ and $\eta_i$, and using the fact that $\mathbb E(\eta_i)=0$, we also note that $\mathbb E(\eta_i \widetilde P_k(X_i))=0$, $1\leq i\leq n$. Thus we have:
\begin{eqnarray*}
\mathbb E(\widehat f_{N,n}(x))&=& \sum \limits_{k=0}^N \frac{2}{n}\sum \limits_{i=1}^n \mathbb E \Big((f(X_i)+\eta_i)\widetilde{P}_k(X_i)\Big)\widetilde{P}_k(x) \\ 
&=& \sum_{k=0}^N \left(\frac{2}{n}\sum_{i=1}^n \mathbb E_X(f(X_i)\widetilde{P}_k(X_i)) \widetilde{P}_k(x)\right) \\
&=& \sum_{k=0}^N <f,\tilde{P}_k>\tilde{P}_k(x) \\ 
&=&\pi_N(f)(x).
\end{eqnarray*}
Now, we calculate
\begin{eqnarray*}
\widehat f_{N,n}(x)-\mathbb E\big(\widehat f_{N,n}(x)\big)&=& \frac{2}{n}\sum_{i=1}^n Y_i K_N(X_i,x)  -\pi_N(f)(x) \nonumber\\
&=&\frac{1}{n} \sum \limits_{i=1}^n  \sum_{k=0}^N  \left[ 2(f(X_i)+\eta_i)\widetilde{P}_k(X_i)\widetilde{P}_k(x)-  <f,\widetilde{P}_k>\widetilde{P}_k(x)\right] \\
&=&\frac{1}{n} \sum_{i=1}^n \xi_i(x).
\end{eqnarray*}
We have already checked that $\mathbb E(\xi_i)=0.$ Now,
\begin{eqnarray*}
\|\xi_i\|^2 = \left\|\sum_{k=0}^N \Big(2(f(X_i)+\eta_i)\widetilde P_k(X_i)-<f,\widetilde P_k>\Big) \widetilde P_k\right\|^2,
\end{eqnarray*}
and by Parseval's equality, we get 
\begin{eqnarray}\label{pars}
\lefteqn{\left\|\sum_{k=0}^N \Big(2(f(X_i)+\eta_i)\widetilde P_k(X_i)-<f,\widetilde P_k>\Big) \widetilde P_k\right\|^2 =\sum_{k=0}^N \Big(2(f(X_i)+\eta_i)\widetilde P_k(X_i)-<f,\widetilde P_k>\Big)^2} \nonumber \\ 
&&\qquad\qquad \qquad\qquad\qquad\qquad\qquad\qquad\qquad\leq  2 \sum \limits_{k=0}^N \left(4(|f(X_i)|+\eta_i)^2 |\widetilde{P}_k(X_i)|^2+ | <f,\widetilde{P}_k>|^2\right).
\end{eqnarray}
Now, the set $(\widetilde P_k)_{0\leq k\leq N}$ is an orthonormal set of $L^2(I),$ thus by Bessel inequality, we have:
\begin{equation}\label{bessel}
\sum_{k=0}^N |<f,\widetilde P_k>|^2 \leq \|f\|^2.
\end{equation}
Hence, it follows from \eqref{pars}, \eqref{bessel} and \eqref{Legendre2} that: 
\begin{eqnarray*}
\|\xi_i\|^2 &\leq&  8(\|f\|_{\infty}+\varepsilon)^2 \sum_{k=0}^N \Big(k+\frac{1}{2}\Big)+2\|f\|^2 \nonumber \\
&\leq & 4(\|f\|_{\infty}+\varepsilon)^2(N+1)^2+ 2 \|f\|^2,
\end{eqnarray*}
and thus
\begin{equation*}
 \|\xi_i\| \leq C_{f,N} := \Big(4(\|f\|_{\infty}+\varepsilon)^2(N+1)^2+ 2 \|f\|^2\Big)^{1/2},\quad\forall\, 1\leq i \leq n.
\end{equation*}
In view of Pinelis inequality, we have, for any $\varepsilon >0$:
\begin{equation}\label{Estimation3}
\mathbb{P}\left(\|\widehat f_{N,n}-\mathbb E(\widehat f_{N,n})\|\leq \epsilon\right) \geq 1-2 e^{-\frac{n\varepsilon^2}{2 C_{f,N}^2}}.
\end{equation}
Moreover by using \eqref{Estimation1}, we get 
\begin{equation}\label{Eqq2.3}
\|f-\widehat f_{N,n}\|\leq \|f-\pi_N(f)\|+\|\widehat f_{N,n}-\mathbb E(\widehat f_{N,n})\|. 
\end{equation}
On the other hand, it is well-known that if $f \in H^{s}(I)$, we have 
\begin{equation}\label{Eq2.3}
\|f-\pi_N(f)\| \leq c_1 N^{-s} \|f\|_{H^s},
\end{equation}
for some uniform constant $c_1.$
Hence, by combining \eqref{Estimation3},  \eqref{Eqq2.3}  and \eqref{Eq2.3}, we obtain
\begin{equation*}
\mathbb{P}\left(\| f-\widehat f_{N,n}\|\leq\varepsilon+c_1 N^{-s}\|f\|_{H^s}\right)\geq 1- 2e^{-\frac{n\varepsilon^2}{2 C_{f,N}^2}}.
\end{equation*}
By using the substitution $\delta = 2e^{-\frac{n\varepsilon^2}{2 C_{f,N}^2}},$ this inequality can be rewritten as \eqref{Estimation2}. Finally, to prove \eqref{Estimation22}, it suffices to note that if $f\in \mathcal B_c$, we have, for $N\geq \frac{ec}{2}$:
\begin{equation*}
\| f-\pi_N(f)\|\leq c_2 e^{-(N+2) \log\Big(\frac{2N+2}{ec}\Big)}\|f\|_{L^2(\mathbb R)},
\end{equation*}
for some uniform constant $c_2.$ By using similar techniques as above, we obtain \eqref{Estimation22}.
\end{proof}

\begin{remark}
From the error bound given by \eqref{Estimation2}, we conclude that if the regression function $f$ belongs to $H^s(I), \, s>0,$ then the minimum  error of the  estimator $\widehat f_{N,n}$  is obtained when the two error terms in \eqref{Estimation2} are of the same order. Straightforward computation shows that this is the case if $N^{-s}$ has the same order as $\frac{M_{f,N}}{\sqrt{n}}$, that is when $N=O(n^{\frac{1}{2(s+1)}}).$ Consequently, the  convergence rate of $\widehat f_{N,n}$ is $O(n^{\frac{-s}{2(1+s)}}),$ where $s$ is the Sobolev smoothness of $f$.
\end{remark}

Now, we extend the result of Theorem~\ref{Thm1} to the case of the Sinc kernel. For this purpose, we consider a real number $c>0$ and the 
Sinc kernel $K_c(x,y)=\frac{\sin c(x-y)}{\pi(x-y)},\, x,y\in I.$ Let $(\psi_{n,c}(x))_{n\geq 0}, (\lambda_n(c))_{n\geq 0}$ be the set of orthonormal eigenfunctions and  associated eigenvalues of the Sinc kernel operator $\mathcal Q_c,$ given by $\mathcal Q_c (f)(x)=<K_c(x,\cdot),f>$. Let $\pi_c$ denote the projection operator over the subspace spanned by the $\psi_{n,c}$. Then, by using the fact that the $(\psi_{n,c}(\cdot))_{n\geq 0}$ form an orthonormal basis of $L^2(I),$ we obtain
\begin{equation}\label{Eq2.4}
f(x)=\pi_c(f)(x)= \sum \limits_{n=0}^{\infty} <f,\psi_{n,c}>\psi_{n,c}(x),\quad \forall\, f\in L^2(I).
\end{equation}
The regression estimator of $f$ in model \eqref{Nregression1}, given by the  empirical projection operator associated with the Sinc kernel, is defined by:
\begin{equation}\label{Eq2.5}
 \widehat f_{c,n}(x)=\widetilde{\pi}_{c,n}(f)(x)= \frac{2}{n}\sum_{i=1}^n Y_i K_c(x,X_i)=\frac{2}{n}\sum_{i=1}^n (f(X_i)+\eta_i) K_c(x,X_i).
\end{equation}
The Sinc kernel defined on $I^2$ is a Mercer's kernel. Then assuming that $f$ is bounded on $I$ and using \eqref{Sinc_kernel2} and the fact that $\mathbb E(\eta_i K(X_i,x))=0$, we calculate: 
\begin{eqnarray}\label{Eq2.6}
\mathbb E (\widehat f_{c,n}(x))&=& \mathbb E(\widetilde{\pi}_{c,n}(f)(x))=\int_{I} K_c(x,y)f(y)\, dy = \int_{I}\sum_{k=0}^{\infty} \lambda_k(c) \psi_{k,c}(x) \psi_{k,c}(y) f(y)\, dy \nonumber \\
&=& \sum_{k=0}^{\infty} \lambda_k(c)\left(\int_{I}  \psi_{k,c}(y) f(y)\, dy\right)\psi_{k,c}(x)=\sum_{k=0}^{\infty} \lambda_k(c)<f,  \psi_{k,c}>\psi_{k,c}(x).
\end{eqnarray}
Note that  $\mathbb E(\widehat f_{c,n})\neq \pi_c(f).$ Consequently, the  approach we used for analysing the Legendre Christoffel-Darboux based regression scheme cannot be applied to the Sinc kernel. To overcome this difficulty, we first substitute the usual Sobolev space $H^s(I)$ with the weighted Sobolev space $\widetilde H^s(I),$ defined  by
\begin{equation*}
\widetilde H^s(I)=\left\{ f\in L^2(I),\,\, \|f\|^2_{\widetilde H^s}=\sum_{k\geq 0} (1+k^2)^s |<f,\psi_{k,c}>|^2 <+\infty\right\}.
\end{equation*}
It is easy to check that if $f\in \widetilde H^s(I),$ then for any positive integer $N,$
\begin{equation}
\label{Eq2.7}
\sum_{n=N+1}^\infty  |<f,\psi_{n,c}>|^2 =\| f-\pi_{c,N} (f)\|^2\leq N^{-2s}  \|f\|^2_{\widetilde H^s}.
\end{equation}
We first establish two technical lemmas that will be needed to provide an error analysis of the Sinc kernel based regression scheme for a regression function belonging to the weighted Sobolev space $\widetilde H^s(I)$.

\begin{lemma}\label{Lem1}
For any real number $c\geq 6$ and any positive integer $N$ such that $N+1\leq \frac{c}{3},$ we have 
\begin{equation}
\label{Eq2.9}
\frac{e^{-c}}{\sqrt{c}} \sum_{k=0}^N \frac{(2c)^k}{k!}\leq \frac{1}{\sqrt{6}} \left(\frac{e^2}{6}\right)^{-c/3}.
\end{equation}
\end{lemma}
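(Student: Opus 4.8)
The plan is to exploit that the hypothesis $N+1\le c/3$ forces the index $N$ to lie far below $2c$, so that the sum $\sum_{k=0}^N(2c)^k/k!$ is, up to a constant factor, just its last term $(2c)^N/N!$; this term is then controlled by Stirling's inequality together with the fact that $x\mapsto(2ce/x)^x$ is increasing for $x<2c$, which pins the extremal configuration at $N+1=c/3$.

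First I would record the geometric domination. Writing $a_k=(2c)^k/k!$, one has $a_{k-1}/a_k=k/(2c)$, and for $1\le k\le N$ the bound $N+1\le c/3$ gives $k/(2c)\le N/(2c)\le 1/6$. Hence $a_{N-j}\le 6^{-j}a_N$, and summing the geometric series,
\[
\sum_{k=0}^N a_k\le a_N\sum_{j=0}^{\infty}6^{-j}=\frac65\,a_N.
\]
It is convenient to shift the index once more: $a_N=\frac{N+1}{2c}\,a_{N+1}$ with $\frac{N+1}{2c}\le\frac{c/3}{2c}=\frac16$, so that $\sum_{k=0}^N a_k\le\frac15\cdot\frac{(2c)^{N+1}}{(N+1)!}$.

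Next I would bound the factorial. From $e^{m}\ge m^{m}/m!$ one gets $m!\ge(m/e)^{m}$, hence with $m=N+1$,
\[
\frac{(2c)^{N+1}}{(N+1)!}\le\left(\frac{2ce}{N+1}\right)^{N+1}.
\]
The function $\phi(m)=m\log(2ce/m)$ satisfies $\phi'(m)=\log(2c/m)$, so $\phi$ is increasing on $(0,2c)$; since $N+1\le c/3<2c$ this gives $\big(\tfrac{2ce}{N+1}\big)^{N+1}=e^{\phi(N+1)}\le e^{\phi(c/3)}=(6e)^{c/3}$. Combining the two steps, $\sum_{k=0}^N a_k\le\frac15(6e)^{c/3}$.

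Finally I would multiply by $e^{-c}/\sqrt c$ and simplify the exponentials via $e^{-c}(6e)^{c/3}=6^{c/3}e^{-2c/3}=(e^2/6)^{-c/3}$, obtaining
\[
\frac{e^{-c}}{\sqrt c}\sum_{k=0}^N\frac{(2c)^k}{k!}\le\frac{1}{5\sqrt c}\left(\frac{e^2}{6}\right)^{-c/3}\le\frac{1}{\sqrt6}\left(\frac{e^2}{6}\right)^{-c/3},
\]
the last step because $c\ge6$ makes $5\sqrt c\ge5\sqrt6>\sqrt6$. This is the claim, in fact with a good deal of room: the argument actually delivers the constant $\tfrac1{5\sqrt c}$, and the hypothesis $c\ge6$ is used only loosely. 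The only step that deserves a moment's care is the monotonicity of $\phi$ on $(0,2c)$, i.e.\ justifying that the worst case occurs at $N+1=c/3$; everything else is a routine chain of elementary inequalities.
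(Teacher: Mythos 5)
Your proof is correct, and while it rests on the same basic strategy as the paper's --- the hypothesis $N+1\le c/3\ll 2c$ makes the sum comparable to its top term, a Stirling-type lower bound controls the factorial, and monotonicity of $m\mapsto m\log(2ce/m)$ on $(0,2c)$ pushes everything to the endpoint $c/3$, after which the algebra $e^{-c}(6e)^{c/3}=(e^2/6)^{-c/3}$ is identical --- the execution differs in two respects worth noting. First, where the paper bounds the sum by $(N+1)$ times its largest term, you exploit the ratio $a_{k-1}/a_k=k/(2c)\le 1/6$ to dominate the sum by a geometric series, getting $\sum_{k\le N}a_k\le \tfrac65 a_N\le\tfrac15 a_{N+1}$; this eliminates the factor $N+1$ altogether and is what produces your sharper constant $\tfrac1{5\sqrt c}$ in place of $\tfrac1{\sqrt6}$. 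Second, you only need the elementary bound $m!\ge(m/e)^m$, whereas the paper invokes Batir's refined inequality $k!\ge\sqrt{2e}\,\bigl(\tfrac{2k+1}{2e}\bigr)^{k+1/2}$ and must then absorb the prefactor $\tfrac{N+1}{\sqrt{(2N+1)c}}$ into $\tfrac1{\sqrt6}$, a borderline constant check under the sole hypothesis $N+1\le c/3$; your route sidesteps that delicacy entirely, using $c\ge 6$ only at the painless last step $5\sqrt c\ge\sqrt6$. In short: same skeleton, but your handling of the sum is more elementary, slightly more robust, and yields a strictly stronger bound.
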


\begin{proof} First, note from \cite{Batir} that $k!= \Gamma(k+1) \geq \sqrt{2e} \left(\frac{2k+1}{2e}\right)^{k+\frac{1}{2}}$. Therefore:
\begin{equation}\label{majoration}
\frac{e^{-c}}{\sqrt{c}} \sum_{k=0}^N \frac{(2c)^k}{k!} \leq e^{-c} \sum_{k=0}^N \frac{1}{\sqrt{(2k+1)c}} \left(\frac{4ec}{2k+1}\right)^k.
\end{equation}
If $0\leq k\leq N$ with $N+1\leq \frac{c}{3},$ the quantity in the sum above is an increasing function of $k.$ Thus, the right-hand side of \eqref{majoration} is bounded as follows:
\begin{eqnarray*}
e^{-c}\sum_{k=0}^N \frac{1}{\sqrt{(2k+1)c}} \left(\frac{4ec}{2k+1}\right)^k &\leq& e^{-c}\frac{N+1}{\sqrt{(2N+1)c}} \left(\frac{4ec}{2c/3}\right)^{c/3}\\
&\leq & e^{-c} \frac{1}{\sqrt{6}} (6e)^{c/3}=\frac{1}{\sqrt{6}}\left(\frac{e^2}{6}\right)^{-c/3}.
\end{eqnarray*} 
\end{proof}

\begin{lemma}\label{Lem2}
Let $f\in \widetilde H^s(I),\, s>0$. Then for any real $c\geq 6,$  we have (with notations as above):
\begin{equation}\label{Eq2.10}
\| f- \mathbb E (\widehat f_{c,n})\|\leq \frac{7}{\sqrt{6}} \left(\frac{e^2}{6}\right)^{-[c/3]}\|f\|+\left[\frac{c}{3}\right]^{-s} \|f\|_{\widetilde H^s}.
\end{equation}
\end{lemma}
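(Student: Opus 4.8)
The plan is to expand $f-\mathbb E(\widehat f_{c,n})$ in the prolate basis $(\psi_{k,c})$ and split it at a frequency of order $c/3$: below that threshold the eigenvalues $\lambda_k(c)$ are exponentially close to $1$ (so the corresponding error terms are tiny by \eqref{decay2}), while above it the error is controlled by the weighted Sobolev smoothness of $f$. Since $\mathbb E(\widehat f_{c,n})=\mathcal Q_c f$, formulas \eqref{Eq2.4} and \eqref{Eq2.6} give
\[
f-\mathbb E(\widehat f_{c,n})=\sum_{k=0}^{\infty}\bigl(1-\lambda_k(c)\bigr)<f,\psi_{k,c}>\psi_{k,c}.
\]
Fix the positive integer $N$ with $N+1=[c/3]$ (which is $\ge1$ since $c\ge6$), and estimate $\|f-\mathbb E(\widehat f_{c,n})\|\le A+B$, where $A$ is the $L^2$-norm of the partial sum over $0\le k\le N$ and $B$ that of the tail over $k\ge N+1$.

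For the tail $B$ I would only use $0<\lambda_k(c)<1$, hence $|1-\lambda_k(c)|\le1$, Parseval's identity, and the definition of $\|\cdot\|_{\widetilde H^s}$: this is exactly \eqref{Eq2.7}, in its sharper form $\sum_{k>N}|<f,\psi_{k,c}>|^2\le(1+(N+1)^2)^{-s}\|f\|_{\widetilde H^s}^2\le(N+1)^{-2s}\|f\|_{\widetilde H^s}^2$, and it gives $B\le[c/3]^{-s}\|f\|_{\widetilde H^s}$.

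For the low-frequency part $A$, I would apply the triangle inequality term by term (each $\psi_{k,c}$ has unit norm) followed by Cauchy--Schwarz, $|<f,\psi_{k,c}>|\le\|f\|$, to obtain $A\le\|f\|\sum_{k=0}^{N}\bigl(1-\lambda_k(c)\bigr)$. Since $0\le k\le N<c/2.7$, the non-asymptotic bound \eqref{decay2} applies to every term, so $\sum_{k=0}^{N}(1-\lambda_k(c))\le\frac{7e^{-c}}{\sqrt c}\sum_{k=0}^{N}\frac{(2c)^k}{k!}$. Because $N+1=[c/3]\le c/3$, Lemma~\ref{Lem1} bounds this last sum by $\frac{7}{\sqrt6}\bigl(\frac{e^2}{6}\bigr)^{-c/3}$, and since $\frac{e^2}{6}>1$ and $[c/3]\le c/3$ the exponent $-c/3$ may be replaced by $-[c/3]$, giving $A\le\frac{7}{\sqrt6}\bigl(\frac{e^2}{6}\bigr)^{-[c/3]}\|f\|$. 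Combining the estimates for $A$ and $B$ yields \eqref{Eq2.10}.

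The only substantial step is the bound on $A$, and its real difficulty has already been isolated in Lemma~\ref{Lem1}; the rest is bookkeeping around the choice of split index $N$, which must be taken so that simultaneously $N+1\le c/3$ (to invoke Lemma~\ref{Lem1} and get the constant $7/\sqrt6$), $N<c/2.7$ (so that \eqref{decay2} is valid over the whole low-frequency range), and the tail exponent comes out exactly as $[c/3]^{-s}$. The hypothesis $c\ge6$ serves precisely to guarantee $N=[c/3]-1\ge1$ and is what Lemma~\ref{Lem1} requires.
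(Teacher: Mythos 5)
Your proof is correct and follows essentially the same route as the paper: expand in the prolate basis, split at an index of order $[c/3]$, bound the low-frequency part via Cauchy--Schwarz together with \eqref{decay2} and Lemma~\ref{Lem1}, and bound the tail via Parseval and the weighted Sobolev decay \eqref{Eq2.7}. The only difference is your choice of split $N+1=[c/3]$ instead of the paper's $N=[c/3]$, a bookkeeping variant that if anything matches the hypothesis $N+1\le c/3$ of Lemma~\ref{Lem1} more cleanly.
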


\begin{proof}
Let $N$ be a positive integer such that $N\leq \left[\frac{c}{3}\right]$. It follows from \eqref{Eq2.4} and \eqref{Eq2.6} that:
\begin{eqnarray}\label{Eqq2.10}
\lefteqn{\left\|f-\mathbb E(\widehat f_{c,n})\right\|=\Big\|f- \sum \limits_{n=0}^{\infty} \lambda_n(c)<f,\psi_{n,c}>\psi_{n,c}\Big\|
=\Big\|\sum \limits_{n=0}^{\infty} \big(1-\lambda_n(c)\big)<f,\psi_{n,c}>\psi_{n,c}\Big\|}\nonumber\\
&\leq & \Big\|\sum \limits_{n=0}^{N} \big(1-\lambda_n(c)\big) <f,\psi_{n,c}>\psi_{n,c}\Big\| + \Big\|\sum \limits_{n=N+1}^{\infty} (1-\lambda_n(c))<f,\psi_{n,c}> \psi_{n,c}\Big\|.
\end{eqnarray}
To bound the first term in the right-hand side of \eqref{Eqq2.10}, we proceed as follows. Cauchy-Schwarz inequality and the fact that $\|\psi_{n,c}\|=1$ imply: $|<f,\psi_{n,c}>|\leq \|f\| \|\psi_{n,c}\| \leq  \|f\|.$ Moreover, $1-\lambda_n(c)>0$ for $n\geq 0$. Thus by using Minkowski inequality, we obtain: 
\begin{equation*}
\Big\|\sum_{n=0}^{N} \big(1-\lambda_n(c)\big) <f,\psi_{n,c}> \psi_{n,c}\Big \| \leq \sum_{n=0}^N  \big(1-\lambda_n(c)\big) \|f\|.
\end{equation*}
From \eqref{decay2}, for any $0\leq n\leq N\leq c/2.7,$ we have ${\displaystyle 1-\lambda_n(c)\leq \frac{7}{\sqrt{c}} e^{-c} \frac{(2c)^n}{n!}}$. Combining this with the previous inequality and inequality \eqref{Eq2.9} of Lemma \ref{Lem1}, we obtain, for $N=[\frac{c}{3}]$:
\begin{equation}\label{Eq2.11}
\Big\|\sum \limits_{n=0}^{N} \big(1-\lambda_n(c)\big) <f,\psi_{n,c}>\psi_{n,c}\Big\|\leq \frac{7}{\sqrt{6}} \left(\frac{e^2}{6}\right)^{-[c/3]}\|f\|.
\end{equation}
To bound the second term in the right-hand side of \eqref{Eqq2.10}, we use again the fact that $0< \lambda_n(c)<1$ for $n\geq 0.$ Then, using Parseval's equality yields:
$$ \Big\|\sum \limits_{n=N+1}^{\infty} \big(1-\lambda_n(c)\big)<f,\psi_{n,c}> \psi_{n,c}\Big\|^2=\sum \limits_{n=N+1}^{\infty} \big(1-\lambda_n(c)\big)^2|<f,\psi_{n,c}>|^2\leq \sum \limits_{n=N+1}^{\infty}|<f,\psi_{n,c}>|^2.$$
Hence, by using  \eqref{Eq2.7}  with $N=\left[\frac{c}{3}\right],$ we obtain
\begin{equation}\label{Eq2.12}
\Big\|\sum \limits_{n=N+1}^{\infty} (1-\lambda_n(c))<f,\psi_{n,c}> \psi_{n,c} \Big\|\leq \left[\frac{c}{3}\right]^{-s} \|f\|_{\widetilde H^s}.
\end{equation}
Finally, combining \eqref{Eqq2.10}, \eqref{Eq2.11} and \eqref{Eq2.12} concludes the proof.
\end{proof}

The next theorem quantifies the quality of the Sinc kernel based regression scheme in the weighted Sobolev space.

\begin{theorem}\label{Thm2}
Let $c\geq 6$ and $s>0$ be positive real numbers. Assume that the regression function $f$ in model \eqref{Nregression1} is bounded on $I$ and belongs to $\widetilde H^s(I).$  Then, for any $0<\delta <1,$ the following holds with probability at least $1-\delta$:
\begin{equation}\label{Error2}
\| f-\widehat f_{c,n}\|\leq \frac{M_{f,c}}{\sqrt{n}}\sqrt{\log\left(\frac{2}{\delta}\right)} + \frac{7}{\sqrt{6}} \left(\frac{e^2}{6}\right)^{-[c/3]}\|f\|+\left[\frac{c}{3}\right]^{-s} \|f\|_{\widetilde H^s},
\end{equation}
where $M_{f,c}=\sqrt{\frac{2c}{\pi}}(2\|f\|_\infty+2\varepsilon+\sqrt{2} \|f\|)$ and $\varepsilon=\max_i|\eta_i|$. 
\end{theorem}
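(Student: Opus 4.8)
The plan is to reproduce the structure of the proof of Theorem~\ref{Thm1}, with Lemma~\ref{Lem2} now playing the role that the deterministic projection estimate \eqref{Eq2.3} played there. Starting from the triangle inequality
\begin{equation*}
\| f-\widehat f_{c,n}\| \le \| f-\mathbb E(\widehat f_{c,n})\| + \| \widehat f_{c,n}-\mathbb E(\widehat f_{c,n})\| ,
\end{equation*}
and recalling that $c\ge 6$ and $f\in\widetilde H^s(I)$, Lemma~\ref{Lem2} bounds the first (deterministic, bias) term by $\frac{7}{\sqrt 6}\left(\frac{e^2}{6}\right)^{-[c/3]}\|f\|+\left[\frac{c}{3}\right]^{-s}\|f\|_{\widetilde H^s}$, which is precisely the pair of non-stochastic terms appearing in \eqref{Error2}. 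It therefore remains to control the stochastic term $\| \widehat f_{c,n}-\mathbb E(\widehat f_{c,n})\|$ by the Hilbert space valued Pinelis inequality in $L^2(I)$.

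To this end, write $\widehat f_{c,n}-\mathbb E(\widehat f_{c,n})=\frac1n\sum_{i=1}^n\xi_i$, where $\xi_i(x)=2\left(f(X_i)+\eta_i\right)K_c(x,X_i)-\mathbb E(\widehat f_{c,n})(x)$ and, by \eqref{Eq2.6}, $\mathbb E(\widehat f_{c,n})(x)=\int_I K_c(x,y)f(y)\,dy=\mathcal Q_c f(x)$. The identity $\mathbb E(\xi_i)=0$ follows from $\mathbb E_X\left(f(X_i)K_c(x,X_i)\right)=\frac12\int_I f(y)K_c(x,y)\,dy=\frac12\mathbb E(\widehat f_{c,n})(x)$ together with $\mathbb E\left(\eta_iK_c(x,X_i)\right)=0$, the latter by independence of the $X_i,\eta_i$ and $\mathbb E(\eta_i)=0$. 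Next one needs a uniform bound on $\|\xi_i\|_{L^2(I)}$: Minkowski's inequality gives $\|\xi_i\|\le 2|f(X_i)+\eta_i|\,\|K_c(X_i,\cdot)\|_{L^2(I)}+\|\mathcal Q_c f\|_{L^2(I)}$, and the only genuinely non-mechanical ingredient here is the estimate
\begin{equation*}
\|K_c(x,\cdot)\|_{L^2(I)}^2=\int_{-1}^1\frac{\sin^2(c(x-y))}{\pi^2(x-y)^2}\,dy\le\int_{\mathbb R}\frac{\sin^2(ct)}{\pi^2t^2}\,dt=\frac{c}{\pi}.
\end{equation*}
A Cauchy--Schwarz bound on $\mathcal Q_c f(x)=\langle K_c(x,\cdot),f\rangle$ followed by integration over $I$ then gives $\|\mathcal Q_c f\|_{L^2(I)}\le\sqrt{2c/\pi}\,\|f\|$, and since $|f(X_i)+\eta_i|\le\|f\|_\infty+\varepsilon$, collecting the terms yields $\|\xi_i\|_{L^2(I)}\le 2\sqrt{c/\pi}\,(\|f\|_\infty+\varepsilon)+\sqrt{2c/\pi}\,\|f\|=\frac{1}{\sqrt2}\,M_{f,c}$ for every $i$.

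With this bound, Pinelis' inequality applied to $\xi_1,\dots,\xi_n$ (with constant $M_{f,c}/\sqrt2$) gives, for every $\tau>0$, $\mathbb P\left(\|\widehat f_{c,n}-\mathbb E(\widehat f_{c,n})\|\le\tau\right)\ge 1-2e^{-n\tau^2/M_{f,c}^2}$. Combined with the triangle inequality and Lemma~\ref{Lem2}, this shows that with probability at least $1-2e^{-n\tau^2/M_{f,c}^2}$,
\begin{equation*}
\| f-\widehat f_{c,n}\|\le \tau+\frac{7}{\sqrt 6}\left(\frac{e^2}{6}\right)^{-[c/3]}\|f\|+\left[\frac{c}{3}\right]^{-s}\|f\|_{\widetilde H^s} .
\end{equation*}
The substitution $\delta=2e^{-n\tau^2/M_{f,c}^2}$, equivalently $\tau=\frac{M_{f,c}}{\sqrt n}\sqrt{\log(2/\delta)}$, then turns the last display into the asserted bound \eqref{Error2}.

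The step I expect to be the real obstacle is none of the above: these simply repeat the proof of Theorem~\ref{Thm1}, the one structural difference being that now $\mathbb E(\widehat f_{c,n})=\mathcal Q_c f\ne f=\pi_c(f)$, so a genuine bias term survives. Controlling that bias, namely $\|f-\mathcal Q_c f\|=\left\|\sum_{n\ge0}\big(1-\lambda_n(c)\big)\langle f,\psi_{n,c}\rangle\psi_{n,c}\right\|$, is exactly what Lemma~\ref{Lem2} (and, behind it, Lemma~\ref{Lem1}) accomplishes, by splitting the sum at $N=[c/3]$ and invoking the non-asymptotic prolate eigenvalue lower bound \eqref{decay2} for the low modes and the weighted Sobolev tail estimate \eqref{Eq2.7} for the high modes; that is the substantive content of the theorem. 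Within the present argument, the only point requiring a little care is the $L^2(I)$-norm bound on $K_c(x,\cdot)$, and even that is immediate once one extends the defining integral to all of $\mathbb R$.
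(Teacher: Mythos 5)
Your proposal is correct and follows essentially the same route as the paper: the triangle inequality splitting off the bias term handled by Lemma~\ref{Lem2}, the bound $\|K_c(x,\cdot)\|_{L^2(I)}\le\sqrt{c/\pi}$ via extension to $\mathbb R$ and Plancherel, the resulting uniform bound $\|\xi_i\|\le M_{f,c}/\sqrt{2}$ (the paper's $C_{f,c}$), and Pinelis followed by the substitution $\delta=2e^{-n\tau^2/M_{f,c}^2}$. The constants match the paper's exactly, so nothing is missing.
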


\begin{proof} We have
\begin{equation}\label{Eq2.13}
\|f-\widehat f_{c,n}\|\leq \| f-\mathbb E(\widehat f_{c,n})\|+\|\mathbb E(\widehat f_{c,n})-\widehat f_{c,n}\|.
\end{equation}
Now, using \eqref{Eq2.5} and \eqref{Eq2.6}, we can write:
\begin{eqnarray*}
\widehat f_{c,n}(x)-\mathbb E(\widehat f_{c,n}(x))= \frac{1}{n} \sum_{i=1}^n \Big( 2 K_c(X_i,x) (f(X_i)+\eta_i)-<f, K_c(\cdot,x)> \Big)=\frac{1}{n}\sum_{i=1}^n \xi_i(x),
\end{eqnarray*}
where
\begin{equation}
\label{EEq2.13}
|\xi_i(x)| \leq 2 |K_c(X_i,x)| (|f(X_i)|+|\eta_i|)+|<f,K_c(\cdot,x)>|,\,\, x\in I.
\end{equation}
Next, we check that
\begin{equation}
\label{Eqq2.13}
\| K_c(x,\cdot)\| \leq \sqrt{\frac{c}{\pi}},\quad \forall \, x\in I.
\end{equation}
To see this, note that 
\begin{eqnarray*}
\int_I \left(\frac{\sin c(x-y)}{\pi(x-y)}\right)^2 \, dy =\int_{x-1}^{x+1}\left(\frac{\sin c t}{\pi t}\right)^2 \, dt \leq \int_{\mathbb R} \left(\frac{\sin c t}{\pi t}\right)^2 \, dt.
\end{eqnarray*}
Since $\frac{\sin ct}{\pi t}=\frac{1}{2\pi} \mathcal F(\mathbf 1_{[-c,c]}(\cdot))(t)$ (with $\mathcal F$ the usual Fourier transform), Plancherel's equality implies
\begin{equation*}
\int_{\mathbb R} \left(\frac{\sin c t}{\pi t}\right)^2 \, dt= \frac{1}{4\pi^2} 2\pi \int_{\mathbb R} \Big(\mathbf 1_{[-c,c]}(t)\Big)^2 \, dt =\frac{c}{\pi},
\end{equation*}
which achieves the proof of \eqref{Eqq2.13}. Next, using \eqref{EEq2.13} and \eqref{Eqq2.13} and applying Cauchy-Schwarz inequality to the second term in \eqref{EEq2.13}, we can easily check that:
\begin{equation*}
\|\xi_i\| \leq C_{f,c}:=\sqrt{\frac{c}{\pi}} \Big(2 (\|f\|_\infty+\varepsilon)+\sqrt{2}\|f\|\Big),\quad \mbox{ where }\varepsilon =\max_i |\eta_i|.
\end{equation*}
Applying Pinelis concentration inequality, one gets, for any $\varepsilon >0$:
$$ \mathbb{P}\left(\| \widehat f_{c,n}-\mathbb E(\widehat f_{c,n})\|\leq\varepsilon\right)\geq 1- 2e^{-\frac{n\varepsilon^2}{2 C_{f,c}^2}}.$$
Finally, Theorem \ref{Thm2} is proved if we combine this inequality with \eqref{Eq2.10} (Lemma \ref{Lem2}), \eqref{Eq2.13} and if we take ${\displaystyle \delta = 2e^{-\frac{n\varepsilon^2}{2 C_{f,c}^2}}}$. 
\end{proof}

\begin{remark}
Under the assumption that the bandwidth $c>0$ is large enough and using \eqref{Error2}, we conclude that the Sinc kernel based empirical projection operator attains its minimum error when the first and third terms in \eqref{Error2} are of the same order. Straightforward computation shows that this is the case when ${\displaystyle \sqrt{c}=O(n^{\frac{s}{2s+1}}).}$ Hence, the convergence rate of $\widehat f_{c,n}$ is ${\displaystyle O(n^{\frac{-2s+1}{4s}}),}$ where $s>0$ denotes the Sobolev smoothness of $f.$
\end{remark}

\section{Regression estimator by Tikhonov regularized least square minimization over an RKHS}\label{Tikhonov}
 
In this section, we first briefly describe a learning scheme based on Tikhonov regularized minimization over an RKHS, with  associated  Mercer's kernel $K(\cdot,\cdot)$ (this description is based on references \cite{Smale1, Smale2}). This scheme is used as a tool for constructing an appropriate regression function $f\in \mathcal H_K$ from a set of observations $(X_i,Y_i)_{1\leq i\leq n}$ drawn from a joint probability measure $\rho$ on $\mathcal X\times \mathbb R.$  Here, it is assumed that
the $X_i$'s are random observations with values in $\mathcal X$ and probability distribution $\rho_{\mathcal X}$ on $\mathcal X$. Then, we give some details about the stability and convergence of this scheme when Mercer's kernel is given by the Sinc kernel.

In \cite{Smale1, Smale2}, authors consider the true regression function $f_\rho$ defined on $\mathcal X$ by :
\begin{equation*}
f_\rho(x)=\int_\mathbb R y d\rho(y|x),\quad x\in \mathcal X,
\end{equation*}
where $d\rho(y|x)$ is the conditional distribution of $Y$ given $x.$ Then, they consider the Tikhonov regularization algorithm for the approximation of $f_\rho$, based on the data $(X_i,Y_i)_{1\leq i\leq n}$. For a given regularization parameter $\lambda >0,$ this algorithm consists in finding the solution $f_{\lambda}\in \mathcal H_K$ of the minimization problem
\begin{equation}\label{Tikhonov1}
f_{\lambda}=\arg\min_{f\in \mathcal H_K} \left\{\frac{1}{n} \sum_{i=1}^n \Big(f(X_i)-Y_i)\Big)^2 +\lambda \|f\|^2_K\right \},
\end{equation}
where $\|\cdot\|_K$ is the norm associated with the RKHS $\mathcal H_K$ generated by Mercer's kernel $K(\cdot,\cdot).$ In \cite{Smale1, Smale2}, it is shown that a solution of \eqref{Tikhonov1} is given by the estimator
\begin{equation}\label{Estimator3}
\widehat f^\lambda_n(x)= \sum_{i=1}^n c_{i,\lambda} K(x_i,x),
\end{equation}
where the expansion coefficients vector $\mathbf C_\lambda= (c_{i,\lambda})_{1\leq i\leq n}$ is a solution of the system
\begin{equation}\label{Tikhonov3}
\left[ \Big[K(x_i,x_j)\Big]_{1\leq i,j\leq n} + n\lambda I_n\right] \mathbf C_\lambda =G_\lambda\,  \mathbf C_\lambda =\mathbf Y,\quad \mathbf Y= (y_i)_{1\leq i\leq n},
\end{equation}
with $I_n$ the $n\times n$ identity matrix and $[K(x_i,x_j)]_{1\leq i,j\leq n}$ the random Gram-matrix associated with the kernel $K(\cdot,\cdot).$ This result is a consequence of the famous representer theorem. This  theorem is particularly useful in the sense that even if the RKHS associated with the minimization problem  \eqref{Tikhonov1} is of infinite dimension, the solution  \eqref{Estimator3} always lies in a finite dimensional space.
Thanks to this representer theorem, RKHS based schemes are also used for solving  other types of regression problems  such as functional linear regression, see for example \cite{Shin, Yuan}.  
 Also, under the condition that the regularized random Gram matrix $G_\lambda$ given by \eqref{Tikhonov3} is invertible, the expansion coefficients vector $\mathbf C_\lambda$ is given by 
\begin{equation}\label{Tikhonov4}
\mathbf C_\lambda= G_\lambda^{-1} \mathbf Y = \left[ \Big[K(x_i,x_j)\Big]_{1\leq i,j\leq n} + n\lambda I_n\right]^{-1} \mathbf Y,\quad  \mathbf Y= (y_i)_{1\leq i\leq n}.
\end{equation}

\begin{remark}\label{Rem3.1} 
Tikhonov regularization scheme with Mercer's kernel has the advantage to work with random sampling sets $\{X_i,\, i=1,\ldots,n\}$  drawn from a fairly general probability measure $\rho_{\mathcal X}$ on a compact metric space $\mathcal X.$ This is unlike the empirical projection schemes of the previous section, which are restricted to a sampling set following a uniform law on the  compact interval $I.$  This aspect is important in applications where the $X_i$'s follow a fairly general probability distribution on $\mathcal X.$ 
\end{remark}

We recall that the condition number of an $n\times n$ non-singular matrix $A$ with real or complex entries is given by:
$$\kappa(A)= \| A^{-1}\| \| A\|,$$
where $\|\cdot \|$ is a matrix norm. In the special case of the matrix $2-$norm and if the matrix $A$ is  Hermitian,  the $2-$condition number of $A$ is given by $\kappa_2(A)=\frac{|\lambda_{max}(A)|}{|\lambda_{min}(A)|}.$
In the special case where $A$ is given by the random Gram matrix $G_\lambda,$ given by \eqref{Tikhonov3}, it is well known that the stability of the numerical scheme \eqref{Tikhonov4} depends on the magnitude of $\kappa_2(G_\lambda).$ The larger the quantity $\kappa_2(G_\lambda)$ is, the more unstable is the scheme \eqref{Tikhonov4}  and vice-versa. Since $K$ is a positive-definite Mercer's kernel, the matrix $G_0$ (that is for $\lambda=0$) is an $n\times n$ positive-definite random symmetric matrix. Note that for such a kernel $K,$ we have 
$$\sup_{x,y} |K(x,y)|=\sup_x K(x,x) \leq \kappa^2.$$ Consequently, we have 
\begin{equation}\label{Estimatekappa2}
\kappa_2(G_\lambda)\leq 1+ \frac{\kappa^2}{\lambda}.
\end{equation}
This follows from the fact that 
$$ \kappa_2(G_\lambda)=\frac{\lambda_{\max}(G_\lambda)}{\lambda_{\min}(G_\lambda)}\leq \frac{\lambda_{\max}(G_0)+n\lambda}{n\lambda}\leq \frac{\mbox{Trace} (G_0)+n\lambda}{n\lambda}=\frac{ n\kappa^2+ n\lambda}{n \lambda}.$$
Note that this general upper bound is not optimal since it is based on  the rough bound $\lambda_{\max}(G_0) \leq \mbox{Trace} (G_0)$.
\\
In the case of the Sinc kernel, the $n\times n$ Gram  random matrix $G^c_{0,n}$ and its regularized  version $G^c_{\lambda,n}$ are  given by 
\begin{equation}
\label{GramMatrices}
G^c_{0,n} = \Big[ K_c(x_i,x_j)]_{1\leq i,j\leq n} \quad \mbox{ and }\quad G^c_{\lambda,n} = G^c_{0,n} +n \lambda I_n,
\end{equation}
where $K_c(x_i,x_j)=\frac{\sin(c(x_i-x_j))}{\pi (x_i-x_j)}$ and $c$ is a real positive number. Here, the $x_i$'s are assumed to be random observations from the uniform law on $I=[-1,1].$ In the following proposition, we provide estimates for the $2$-condition numbers $\kappa_2(G^c_{0,n})$ and $\kappa_2(G^c_{\lambda,n}).$

\begin{proposition}\label{2-condtion.prop} Let $G^c_{0,n}$ and $G^c_{\lambda,n}$ be the two positive definite 
random matrices given by \eqref{GramMatrices}. Then, the following holds for any $c\geq \frac{5}{2}$:
\begin{equation}\label{Estimate1}
\mathbb E \big(\kappa_2(G^c_{0,n}) \big) \geq \frac{1}{2} e^{2n \log\big(\frac{2n}{e c}\big)},\quad \forall \, n > \frac{e c}{2}.
\end{equation}
Moreover, for any integer $n\geq 1$ and with high probability, we have:
\begin{equation}\label{Estimate2}
\kappa_2(G^c_{\lambda,n}) \leq 1+ \frac{1}{\lambda}\left(1+\frac{c}{\pi \sqrt{n}}\right).
\end{equation}
\end{proposition}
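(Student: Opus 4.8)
The plan is to treat the two assertions separately, since they are of a very different nature: \eqref{Estimate1} is a lower bound on an expectation and rests on the super-exponential decay of the eigenvalues $\lambda_n(c)$ of the Sinc operator $\mathcal{Q}_c$, while \eqref{Estimate2} is an upper bound holding with high probability and rests on a concentration argument for the random Gram matrix $G^c_{0,n}$.

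For \eqref{Estimate1}, I would start from $\kappa_2(G^c_{0,n}) = \lambda_{\max}(G^c_{0,n})/\lambda_{\min}(G^c_{0,n})$ and bound the two extreme eigenvalues separately. For the numerator, since $G^c_{0,n}$ is positive definite with diagonal entries $K_c(x_i,x_i) = c/\pi$, one has $\lambda_{\max}(G^c_{0,n}) \geq \frac{1}{n}\mathrm{Trace}(G^c_{0,n}) = c/\pi$, or more simply $\lambda_{\max} \geq c/\pi \geq 1$ for $c \geq \pi$ (and a direct estimate for $c \geq 5/2$). The real work is the denominator: I expect the argument to relate $\lambda_{\min}(G^c_{0,n})$ to the smallest eigenvalue of a quadrature approximation of $\mathcal{Q}_c$, namely that for a unit vector $v$ realizing the minimum, $v^\top G^c_{0,n} v = \sum_{i,j} v_i v_j K_c(x_i,x_j)$ is, after normalization, an empirical average approximating $\langle \mathcal{Q}_c \varphi, \varphi\rangle$ for $\varphi = \sum_i v_i \psi$ built from the $\psi_{n,c}$ expansion of the kernel $K_c(x,y) = \sum_k \lambda_k(c)\psi_{k,c}(x)\psi_{k,c}(y)$. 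Choosing $v$ supported on the span of the tail functions $\psi_{k,c}$ with $k \geq n$ forces the quadratic form to be of order $\lambda_n(c)$, and then invoking the bound $\lambda_n(c) \leq e^{-2n\log(2n/(ec))}$ (which follows from \eqref{decay3}, valid for $n \geq \max(ec/2,2)$, since $c \geq 5/2$ makes $ec/2$ the binding constraint and $n > ec/2$ is assumed) gives $\mathbb{E}(\lambda_{\min}(G^c_{0,n})) \lesssim e^{-2n\log(2n/(ec))}$; combining with $\lambda_{\max} \geq 1$ and Jensen's inequality applied to $1/x$ produces the factor $\tfrac12$ and the claimed lower bound. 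The main obstacle here is making rigorous the passage from the discrete quadratic form to the continuous operator $\mathcal{Q}_c$ while controlling the expectation — this is where I expect the constant $\tfrac12$ and the precise exponent to come from, and it will require care about which eigenfunction combination one tests against.

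For \eqref{Estimate2}, I would use $\kappa_2(G^c_{\lambda,n}) = \lambda_{\max}(G^c_{\lambda,n})/\lambda_{\min}(G^c_{\lambda,n}) \leq (\lambda_{\max}(G^c_{0,n}) + n\lambda)/(n\lambda)$, so it suffices to show $\lambda_{\max}(G^c_{0,n}) \leq n(1 + c/(\pi\sqrt{n}))$ with high probability. Write $G^c_{0,n} = D + R$ where $D = (c/\pi) I_n$ is the diagonal part and $R = [K_c(x_i,x_j)\mathbf{1}_{i\neq j}]$ the off-diagonal part; then $\lambda_{\max}(G^c_{0,n}) \leq c/\pi + \|R\|_2$, and one controls $\|R\|_2 \leq \|R\|_{\mathrm{HS}} = (\sum_{i\neq j} K_c(x_i,x_j)^2)^{1/2}$, a sum of $n(n-1)$ mean-zero-ish bounded terms whose expectation is $O(n)$ (each $\mathbb{E}_X(K_c(X_i,X_j)^2) = \frac14\int_I\int_I K_c(x,y)^2\,dx\,dy \leq \frac14\cdot\frac{2c}{\pi}$ by \eqref{Eqq2.13}), so $\mathbb{E}\|R\|_{\mathrm{HS}}^2 = O(nc)$ and a concentration/Markov bound gives $\|R\|_2 \leq c\sqrt{n}/\pi$ with high probability, hence $\lambda_{\max}(G^c_{0,n}) \leq n + c\sqrt{n}/\pi$ after absorbing the $c/\pi$ term; dividing by $n\lambda$ and adding $1$ yields \eqref{Estimate2}. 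The delicate point is getting exactly the stated form $1 + \frac{1}{\lambda}(1 + \frac{c}{\pi\sqrt n})$ rather than a messier constant, which will dictate precisely how the Hilbert–Schmidt norm is bounded and what "with high probability" is taken to mean.
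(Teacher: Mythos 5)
Your plan for \eqref{Estimate2} has a quantitative gap that cannot be repaired within the Hilbert--Schmidt route. First, the expectation computation drops a factor of $n$: with $n(n-1)$ off-diagonal entries each satisfying $\mathbb{E}\big[K_c(X_i,X_j)^2\big]\le \tfrac{1}{4}\cdot\tfrac{2c}{\pi}$, you get $\mathbb{E}\|R\|_{\mathrm{HS}}^2=O(n^2c)$, not $O(nc)$. And this is not an artifact of crude bounding: $\|\tfrac1n G^c_{0,n}\|_{\mathrm{HS}}^2$ genuinely sits near $\sum_k\lambda_k(c)^2\approx 2c/\pi$, while the operator norm sits near $\lambda_0(\mathcal Q_c)\le 1$, because the Sinc kernel has about $2c/\pi$ eigenvalues close to $1$. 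So the best your decomposition can give is $\lambda_{\max}(G^c_{0,n})\lesssim n\sqrt{c}$ with high probability, hence $\kappa_2(G^c_{\lambda,n})\lesssim 1+\tfrac{C\sqrt{c}}{\lambda}$, which is weaker than \eqref{Estimate2} by a factor of order $\sqrt{c}$ in the regime of interest $n\gg c$. The ingredient actually needed (and used in the paper) is a concentration inequality for the top eigenvalue itself: by \cite{Bonami-Karoui4}, $\big|\lambda_1\big(\tfrac1n G^c_{0,n}\big)-\lambda_0(\mathcal Q_c)\big|\le \tfrac{c}{\pi}\tfrac{1+\xi}{\sqrt n}$ with probability at least $1-e^{-\xi^2}$, which together with $\lambda_0(\mathcal Q_c)\le 1$ and your (correct) observation that $\lambda_{\min}\big(\tfrac1n G^c_{\lambda,n}\big)\ge\lambda$ yields exactly the numerator $1+\tfrac{c}{\pi\sqrt n}$.

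For \eqref{Estimate1}, your sketch leaves unproved precisely the step that carries the argument, namely the passage from the expected extreme eigenvalues of the random Gram matrix to the spectrum of $\mathcal Q_c$; the paper handles it by quoting the comparison inequalities \eqref{comparaison} of Shawe-Taylor et al., which give $\mathbb{E}\,\lambda_1\big(\tfrac1n G^c_{0,n}\big)\ge\lambda_0(\mathcal Q_c)$ (take $d=2$) and $\mathbb{E}\,\lambda_n\big(\tfrac1n G^c_{0,n}\big)\le\lambda_{n-1}(\mathcal Q_c)$ (take $d=n$), then concludes with Jensen and the estimates \eqref{decay2}--\eqref{decay3} (in particular $\lambda_0(\mathcal Q_c)\ge\tfrac12$ for $c\ge\tfrac52$). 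Your test-vector idea for the smallest eigenvalue can be made rigorous (choose $v$ orthogonal to the $n-1$ sampled columns $(\psi_{k,c}(X_i))_i$, $k\le n-2$, and bound $v^\top G^c_{0,n}v\le\|v\|^2\sum_{k\ge n-1}\lambda_k(c)\,\|(\psi_{k,c}(X_i))_i\|^2$, whose expectation is $\tfrac n2\sum_{k\ge n-1}\lambda_k(c)$), but your replacement for the top-eigenvalue bound, the trace estimate $\lambda_{\max}(G^c_{0,n})\ge c/\pi$, produces a prefactor of order $c/(\pi n)$ rather than the paper's $\lambda_0(\mathcal Q_c)\ge\tfrac12$; since $n>ec/2$, this loses a factor of order $c/n<1$ and does not reach the stated bound. (A deterministic lower bound on $\lambda_{\max}$ does have the merit of avoiding the paper's questionable independence claim for $\lambda_1$ and $\lambda_n$, but as written your proposal neither fixes this quantitative loss nor supplies the missing discrete-to-continuous comparison, which you yourself flag as the main obstacle.)
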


\begin{proof} To prove \eqref{Estimate1}, we first note that for a general positive definite Gram matrix $G_{0,n}=[K(X_i,X_j)]_{1\leq i,j\leq n},$ we have  
\begin{equation*}
\kappa_2(G_{0,n}) = \frac{\lambda_1(G_{0,n})}{\lambda_n(G_{0,n})}=\frac{\lambda_1\big(\frac{1}{n} G_{0,n}\big)}{\lambda_n\big(\frac{1}{n} G_{0,n}\big)}.
\end{equation*}
Since  $\lambda_1\big(\frac{1}{n} G_{0,n}\big)$ and $\lambda_n\big(\frac{1}{n} G_{0,n}\big)$ are independent and $\mathbb E\lambda_n\big(\frac{1}{n} G_{0,n}\big)>0,$
then we have 
$$ \mathbb E(\kappa_2(G_{0,n}))= \mathbb E \Big(\lambda_1\big(\frac{1}{n} G_{0,n}\big)\Big)\mathbb E \left(\frac{1}{\lambda_n\big(\frac{1}{n} G_{0,n}\big)}\right) \geq  \mathbb E \Big(\lambda_1\big(\frac{1}{n} G_{0,n}\big)\Big) \frac{1}{\mathbb E \Big(\lambda_n\big(\frac{1}{n} G_{0,n}\big)\Big)}.$$
This last inequality follows from Jensen inequality and the convexity on $(0,\infty)$ of the function $x\rightarrow \frac{1}{x}.$ 
 On the other hand, it is known (see for example \cite{Shawe})  that  for $1\leq d\leq n$:
 \begin{equation}\label{comparaison}
 \mathbb E\Big(\sum_{j<d} \lambda_j \big(\frac{1}{n} G_{0,n}\big)\Big)\geq \sum_{j<d} \lambda_{j-1}(L_K),\qquad \mathbb E \Big(\sum_{j\geq d} \lambda_j \big(\frac{1}{n} G_{0,n}\big)\Big)\Big)\leq \sum_{j\geq d} \lambda_{j-1}(L_K).
 \end{equation}
 Here, the $\lambda_j(G_{0,n})$ and $\lambda_j(L_K)$ denote the eigenvalues of the random Gram matrix $G_{0,n}$ and its associated integral operator $L_K,$ respectively.  In particular, for the Sinc kernel $K_c,$ $c>0$ and by  letting $d=2$ and $d=n$ in the first and second inequality of \eqref{comparaison} respectively, we obtain 
 $$ \mathbb E(\kappa_2(G^c_{0,n})) \geq \frac{\lambda_0(\mathcal Q_c)}{\lambda_{n-1}(\mathcal Q_c)}.$$
 Now, to conclude for  the proof of \eqref{Estimate1}, it suffices to combine the previous inequality with inequalities \eqref{decay2} and \eqref{decay3} with 
 $c\geq  \frac{5}{2},$ so that the quantity $1-\frac{7}{\sqrt{c}} e^{-c} \geq \frac{1}{2}.$ 

Then, to prove \eqref{Estimate2}, we first recall Weyl's perturbation theorem of the spectrum of a perturbed Hermitian matrix. Let $H$ be an $n\times n $ Hermitian matrix and  $E$ be a Hermitian  perturbation matrix. We assume that the eigenvalues of the Hermitian matrices are arranged in the decreasing order, that is  
$$\mu_1(H+E)\geq \mu_2(H+E)\geq \cdots \geq \mu_n(H+E),\,\, \lambda_1(E)\geq \lambda_2(E)\geq \cdots \geq \lambda_n(E) \mbox{ and } \rho_1(E)\geq \rho_2(E)\geq \cdots \geq \rho_n(E).$$
Then we have  
\begin{equation*}
\lambda_i(H)+\rho_n(E)\leq \mu_i(H+E)\leq \lambda_i(H)+\rho_1(E),\quad i=1,\ldots,n.
\end{equation*}
Next, we use Weyl's perturbation formula with $H= \frac{1}{n} G^c_{0,n}$ and $E= I_n.$ In this case, using the fact that $\frac{1}{n} G_0$ is positive definite, one gets 
 \begin{equation}
 \label{Estimate3}
 \lambda_n \left(\frac{1}{n} G^c_{\lambda,n}\right)\geq \lambda.
 \end{equation}
Moreover, for any $\xi>0$, we have (see \cite{Bonami-Karoui4}):
\begin{equation*}
\mathbb P\left(\left| \lambda_1\left(\frac{1}{n} G^c_{0,n}\right)-\lambda_0(\mathcal Q_c)\right|\leq \frac{c}{\pi} \frac{(1+\xi)}{\sqrt{n}}\right)\geq 1-e^{-\xi^2}.
\end{equation*}
 Since $\lambda_0(\mathcal Q_c)\leq 1,$ this inequality implies that for any $\xi>0$:
\begin{equation*}
\mathbb P\left(\lambda_1\left(\frac{1}{n} G^c_{0,n}\right)\leq 1+ \frac{c}{\pi} \frac{(1+\xi)}{\sqrt{n}}\right)\geq 1-e^{-\xi^2}.
\end{equation*}
Combining this last inequality with \eqref{Estimate3}, one gets  \eqref{Estimate2}.
 \end{proof}

In \cite{Smale2}, authors provide an error analysis of the previous RKHS based scheme for solving the  penalized least square minimization
problem. More precisely, let $\mathcal X $ be  a  compact set and assume that the  Mercer's kernel $K$ is such that  the associated RKHS $\mathcal H_K\subset C(\mathcal X).$ Let also $\kappa^2 =\sup_{x\in \mathcal X} K(x,x)$ and $L_K$ be the integral operator from $L^2(\mathcal X, \rho_\mathcal X)$ to
$\mathcal H_K$ defined by 
$$ L_K(f)(x) =\int_{\mathcal X} K(x,y) f(y) d \rho_{\mathcal X}(y).$$
Since $K$ is a Mercer's kernel, it follows from the spectral decomposition of a
self-adjoint and positive definite compact operator that the fractional power $L_K^{s}$ (for $s\in \mathbb R$) of $L_K$  is defined by
$$L_K^{s}(f)(x) = \sum_{k\geq 0}  \mu_k^s <f, \varphi_k> \varphi_k(x),\quad x\in \mathcal X.$$
Here, $(\mu_k)_{k\geq 0}$ are the eigenvalues of $L_K$ (which are positive) and $\varphi_k$ are the associated orthonormal eigenfunctions. Assume that there exists $M>0$ such that $|f(x)|\leq M$ almost surely in $\mathcal X$ and that 
\begin{equation}
\label{Assumption1}
L_K^{-r} f\rho \in L^2(\mathcal X, d\rho_X), \quad\mbox{ for some } 0<r \leq 1.
\end{equation}
Recall that $f_\rho$ is the regression function and $\|\cdot\|_\rho$ is the usual norm of $L^2(\mathcal X, d \rho_{\mathcal X}).$ In \cite{Smale2}, authors show that for any $0<\delta <1,$ the following holds with confidence level $1-\delta$:
\begin{equation}
\label{Error1}
\|\widehat f^\lambda_n-f_\rho\|_{\rho} \leq \frac{12 \kappa M \log\left(\frac{4}{\delta}\right)}{\sqrt{n \lambda}}+\lambda^r \| L_K^{-r} f_\rho\|_{\rho},
\end{equation} 
provided that 
\begin{equation*}
 \lambda \geq \frac{8 \kappa^2 \log\left(\frac{4}{\delta}\right)}{\sqrt{n}}.
\end{equation*}

In \cite{Vito}, the authors provide a procedure for choosing the optimal regularization parameter $\lambda$ (i.e. the value which yields the smallest unbiased risk 
for the estimator $\widehat f^{\lambda}_n$). More precisely, under the previous notations,  let  $F$ denote the closure of the range of $L_K.$ If $P_F$ denotes the orthogonal projection over $F$ and if  for some $0<r\leq 1,$ there exists a positive constant $C_r$ such  that  $\| L_K^{-r} P_F f\|_\rho \leq C_r,$ then for any $0<\delta <1,$ the following holds with probability at least $1-\delta$:
\begin{equation}\label{Rosasco_risk}
\mathcal R\big(\widehat f^{\lambda}_n\big) \leq \frac{M\kappa^2}{\lambda \sqrt{n}}\left(1+\frac{\kappa}{\sqrt{\lambda}}\right)\left(1+\sqrt{2 \log \frac{2}{\delta}}\right)+C_r \lambda^r,
\end{equation}
where $\mathcal R\big(\widehat f^{\lambda}_n\big)$ denotes the square root of the unbiased risk of $\widehat f^{\lambda}_n$. Hence, under the previous conditions, the optimal regularization parameter is asymptotically of order $O\big( n^{-\frac{1}{3+2r}}\big).$ Moreover, in \cite{Dieuleveut}, it has been shown that  under the additional hypothesis that the infinite sequence of eigenvalues $\mu_k$ of  $L_K$ has a decay rate of $O\big(k^{-\alpha}\big),$ the optimal regularization parameter is proportional to $O\big(n^{-\alpha/(2\alpha r+1)}\big).$ In this case, the convergence rate of the estimator $\widehat f^{\lambda}_n$ is given by 
\begin{equation}
\label{Dieuleveut_risk}
\| \widehat f^\lambda_n- P_F f\|_\rho= O\Big( n^{-\frac{\alpha r}{2\alpha r +1}}\Big).
\end{equation}
Other similar error bounds for $\widehat f^\lambda_n$ can be found in \cite{Hsu}. From \eqref{Estimatekappa2}, we note that each of the previous conditions on $\lambda$ also ensures numerical stability of the RKHS-Tikhonov  scheme \eqref{Estimator3}-\eqref{Tikhonov4}. We should also mention that when the closure of $L_K$ is dense in  $L^2(\mathcal X, d\rho_X),$ then $P_F=f.$ Also, the error analysis leading to \eqref{Dieuleveut_risk} is limited to a definite positive operator kernel with spectrum $(\lambda_k)_{k\geq 0}$ that decays at a polynomial rate, that is $\lambda_k=O(k^{-\alpha}),$  for some $\alpha >1.$ Unfortunately, this is not the case for both of our Legendre Christoffel-Darboux and Sinc kernels.  Nonetheless, in \cite{Hsu1} the authors have given error analyses and  procedures for the choice of the optimal regularizing parameter $\lambda$ and convergence rate under a fairly general condition on the decay rate of the spectrum of the operator $L_K.$ Such a general condition includes finite rank $L_K$ (which is the case for the Legendre kernel) or exponential decay rate of the spectrum of $L_K$ (which is the case of the Sinc-kernel) .

The error bounds \eqref{Error1}, \eqref{Rosasco_risk} and \eqref{Dieuleveut_risk} are all based on condition \eqref{Assumption1}.
In general, it is not trivial to check this condition, since it is based on some specific spectral properties of the operator $L_K$ and on the choice of the subspace of $L^2(\mathcal X, d \rho_X)$ where such a condition holds. In the special case where the Mercer's kernel is the Sinc kernel defined on $I^2=[-1,1]^2$, and $\rho_X$ is the uniform measure on $I,$ we check that if the true regression function $f$ is the restriction to $I$ of a $c-$bandlimited function $\widetilde f,$ then $f\in L_{K_c}^{-1/2}\big(L^2(I)\big).$  It is well known (see for example \cite{Bonami-Karoui3}) that   if $f$ is $c-$bandlimited, the double orthogonality of the eigenfunctions $\psi_{n,c}$ of $L_{K_c}$ over $I$ and $\mathbb R$ implies
$$ a_n(f) = <f,\psi_{n,c}>= \sqrt{\lambda_n(c)} <f,\psi_{n,c}>_{L^2(\mathbb R)}.$$
Consequently, we have 
$$ L_{K_c}^{-1/2} (f)=\sum_{n\geq 0} \lambda_{n}(c) ^{-1/2} <f,\psi_{n,c}>_{L^2(I)} \psi_{n,c}=\sum_{n\geq 0} <\widetilde f, \psi_{n,c}>_{L^2(\mathbb R)} \psi_{n,c}.$$
Thus, by Parseval's equality, one gets 
$$ \| L_{K_c}^{-1/2}(f)\|^2_{L^2(I)} = \sum_{n=0}^\infty |<\widetilde f,\psi_{n,c}>_{L^2(\mathbb R)}|^2 = \|\widetilde f\|^2_{L^2(\mathbb R)}<+\infty.$$
Also, for the Sinc kernel $K_c(\cdot,\cdot),$ we have $\kappa^2=\sup_{x\in I} K_c(x,x)=\frac{c}{\pi}.$ In order to minimize the $L^2(I)$ regression error,  we require, as in \cite{Smale2}, that for a given $0<\delta <1,$ the optimal theoretical admissible value $\lambda_{opt}$ of the regularization parameter 
$\lambda$ satisfies the inequality ${\lambda_{opt}= \frac{ 8 c  \log\left(\frac{4}{\delta}\right)}{\pi \sqrt{n}} <1.}$
Under this last condition and by  using \eqref{Error1} with $r=\frac{1}{2}$, we obtain the following lemma that provides an  $L^2(I)$-error analysis of the nonparametric regression RKHS-Tikhonov based scheme in  the special case of the Sinc kernel.

\begin{lemma}
Let $c$ be a positive real number and $0<\delta<1.$ Assume that the regression function $f$ is the restriction to $I=[-1,1]$ of a $c-$bandlimited function. For any integer $n\geq \left(\frac{8 c}{\pi}  \log\left(\frac{4}{\delta}\right)\right)^2,$ let $\widehat f^\lambda_{n,c}$ be given by \eqref{Estimator3}-\eqref{Tikhonov4} with $K=K_c.$ Then, with confidence level $1-\delta,$ we have :$$ \| \widehat f^\lambda_{n,c}- f\|_{L^2(I)} = O\left(\frac{ 8\sqrt{c}\log\left(\frac{4}{\delta}\right)}{\sqrt{\pi} n^{1/4}} \right).$$
\end{lemma}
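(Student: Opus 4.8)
The plan is to obtain this lemma as a direct corollary of the error bound \eqref{Error1} of \cite{Smale2}, applied with the Mercer kernel $K=K_c$ on $I^2$, with $\rho_X$ the uniform law on $I$, with regularity exponent $r=\tfrac12$, and with the regularization parameter set to the threshold value $\lambda=\lambda_{opt}=\frac{8c\log(4/\delta)}{\pi\sqrt n}$. Concretely, I would first check that the three hypotheses needed to invoke \eqref{Error1} hold for this choice of data, and then substitute $\lambda=\lambda_{opt}$ into the right-hand side of \eqref{Error1} and simplify, keeping track of the dependence on $c$, $n$ and $\delta$. All normalization constants (such as the factor $\tfrac{1}{\sqrt2}$ relating $\|\cdot\|_\rho$ to $\|\cdot\|_{L^2(I)}$ for the uniform law on $I$, and the corresponding rescaling between $\mathcal Q_c$ and $L_{K_c}$) are harmless and get absorbed into the $O(\cdot)$.

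\emph{Verifying the hypotheses.} (a) Condition \eqref{Assumption1} with $r=\tfrac12$ is exactly what was established in the paragraph preceding the statement: the double orthogonality of the PSWFs over $I$ and over $\mathbb R$ gives $L_{K_c}^{-1/2}(f)=\sum_{n\geq0}\langle\widetilde f,\psi_{n,c}\rangle_{L^2(\mathbb R)}\psi_{n,c}$, hence $\|L_{K_c}^{-1/2}(f)\|_{L^2(I)}=\|\widetilde f\|_{L^2(\mathbb R)}<\infty$, so $f\in L_{K_c}^{-1/2}(L^2(I))$. (b) Boundedness of $f$: since $\widetilde f\in\mathcal B_c$, Fourier inversion together with the Cauchy--Schwarz inequality yields $\|\widetilde f\|_\infty\leq\frac{1}{2\pi}\sqrt{2c}\,\|\mathcal F\widetilde f\|_{L^2(\mathbb R)}=\sqrt{c/\pi}\,\|\widetilde f\|_{L^2(\mathbb R)}$, so one may take $M:=\sqrt{c/\pi}\,\|\widetilde f\|_{L^2(\mathbb R)}=O(\sqrt c)$. (c) Admissibility of $\lambda$: for the Sinc kernel $\kappa^2=\sup_{x\in I}K_c(x,x)=c/\pi$, so the threshold $\frac{8\kappa^2\log(4/\delta)}{\sqrt n}$ appearing in \eqref{Error1} is precisely $\lambda_{opt}$, and the standing assumption $n\geq\big(\frac{8c}{\pi}\log(4/\delta)\big)^2$ guarantees $\lambda_{opt}<1$, i.e.\ the choice $\lambda=\lambda_{opt}$ is admissible.

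\emph{Substituting and simplifying.} With $\lambda=\lambda_{opt}$ one has $n\lambda_{opt}=\frac{8c\log(4/\delta)}{\pi}\sqrt n$, hence $\sqrt{n\lambda_{opt}}=\sqrt{\tfrac{8c\log(4/\delta)}{\pi}}\,n^{1/4}$. Feeding $\kappa=\sqrt{c/\pi}$, $M=\sqrt{c/\pi}\,\|\widetilde f\|_{L^2(\mathbb R)}$ and $\|L_{K_c}^{-1/2}f\|_{L^2(I)}=\|\widetilde f\|_{L^2(\mathbb R)}$ into \eqref{Error1}, the stochastic term $\frac{12\kappa M\log(4/\delta)}{\sqrt{n\lambda_{opt}}}$ is $O\big(\sqrt c\,\log(4/\delta)\,n^{-1/4}\big)$, while the deterministic (bias) term $\lambda_{opt}^{1/2}\|\widetilde f\|_{L^2(\mathbb R)}=\sqrt{\tfrac{8c\log(4/\delta)}{\pi}}\,\|\widetilde f\|_{L^2(\mathbb R)}\,n^{-1/4}$ is likewise $O\big(\sqrt c\,\log(4/\delta)\,n^{-1/4}\big)$, where I used $\sqrt{\log(4/\delta)}\leq\log(4/\delta)$ since $0<\delta<1$. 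Summing the two contributions gives, with confidence $1-\delta$, $\|\widehat f^\lambda_{n,c}-f\|_{L^2(I)}=O\big(\frac{8\sqrt c\,\log(4/\delta)}{\sqrt\pi\,n^{1/4}}\big)$, which is the asserted bound.

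There is essentially no obstacle here beyond bookkeeping: the heavy lifting is done by the cited theorem and by the earlier verification that $f\in L_{K_c}^{-1/2}(L^2(I))$. The two points that do require a bit of care are (i) matching the admissibility threshold of \eqref{Error1} with the definition of $\lambda_{opt}$ --- these coincide precisely because $\kappa^2=c/\pi$ for the Sinc kernel --- and (ii) not losing the $\sqrt c$ factor, which enters simultaneously through $\kappa$, through the bandlimited bound $\|f\|_\infty=O(\sqrt c)$, and through $\lambda_{opt}$, and which must survive the cancellations in order to produce the stated rate.
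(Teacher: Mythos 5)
Your proposal is correct and follows essentially the same route as the paper: the lemma there is obtained exactly by invoking the bound \eqref{Error1} with $r=\tfrac12$, using the preceding verification that $f\in L_{K_c}^{-1/2}\big(L^2(I)\big)$ with $\|L_{K_c}^{-1/2}f\|_{L^2(I)}=\|\widetilde f\|_{L^2(\mathbb R)}$, the value $\kappa^2=\sup_{x\in I}K_c(x,x)=c/\pi$, and the admissible choice $\lambda=\lambda_{opt}=\frac{8c\log(4/\delta)}{\pi\sqrt n}$, which your condition $n\geq\big(\frac{8c}{\pi}\log(4/\delta)\big)^2$ matches. Your extra bookkeeping (the bound $M=O(\sqrt c)$ for a $c$-bandlimited $f$ and the harmless normalization between $\|\cdot\|_\rho$ and $\|\cdot\|_{L^2(I)}$) only makes explicit what the paper leaves implicit, and the substitution yields the stated $O\big(\sqrt c\,\log(4/\delta)\,n^{-1/4}\big)$ rate as claimed.
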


\section{Numerical results}\label{numerical.study}

In this section, we give various  numerical tests that illustrate the nonparametric regression schemes of sections \ref{projection.operators} and \ref{Tikhonov}.

\noindent
{\bf Example 1:} In this example, we illustrate the Legendre and Sinc kernels projection estimators, as well as Tikhonov regularization scheme with Sinc kernel, for the approximation  of the regression function $f$ in model \eqref{Nregression1}. Here, $f$
is the restriction to $I$ of a bandlimited function with bandwidth $c=20.$ Thus, the model is
\begin{equation*}
Y_i = f(X_i)+\eta_i,\qquad f(x) =\frac{\sin(20 x)}{20 x},\quad x\in I,
\end{equation*}
where the random errors $\eta_i$ are distributed as $0.1 Z_i$, where $Z_i$ follows the standard normal distribution with mean zero and variance 1. We compute the two empirical projection estimators $\widehat f_{N,n}$ and $\widehat f_{c,n}$ with $N=20$ (for Legendre kernel) and $c=20$ (for the Sinc kernel). We consider the following sample sizes: $n=100, 500, 1000$. We also obtain the estimator $\widehat f^\lambda_n$ with the Sinc kernel $K_c(\cdot,\cdot)$ with $c=30$ and for $n=50, 100, 500$. For this last estimator, we use generalized cross validation (GCV) to choose the appropriate value $ \lambda_{GCV}$ of $\lambda$, that is, the value which provides an error close to the minimum regression error. Note that under the notations of section \ref{Tikhonov},  $\lambda_{GCV}$ is given by 
$$ \lambda_{GCV}=\arg\min \frac{\sum_{i=1}^n \big(\widehat f^\lambda_n(X_i)- Y_i\big)^2}{n \left(1 - \frac{\mbox{Trace} (G_\lambda^{-1} G_0)}{n}\right)^2}.$$
Here, we obtain $\lambda_{GCV}= 0.01.$ For illustration, in Figure \ref{figure1}, we plot the true function $f$, its noised version  $\widetilde f,$ and the approximation  of $f$ by the empirical projection estimator  $\widehat f_{c,n}$  with $n=1000$ and $c=20.$  We use a highly accurate Gaussian quadrature scheme to compute the $L^2(I)$-regression errors $\|f-\widehat f_{N,n}\|,$  $\|f-\widehat f_{c,n}\|$ and $\|f-\widehat f^{\lambda}_{n}\|$.
This simulation scenario is repeated 500 times and the average regression errors  are given in Table \ref{table1}. The numerical results indicate that the three estimators $\widehat f_{c,n},$ $\widehat f_{N,n}$ and $\widehat f^\lambda_{n}$ have very similar precision, for the respective values of their parameters. We also compute the average running time of each estimator. The estimator $\widehat f_{c,n}$ is the fastest one. For example, we found that computing $\widehat f_{c,n}$ with $n=1000$ is $15$ times faster than computing $\widehat f^\lambda_n$ with $n=150.$  This is coherent with the well known fact (see for example \cite{Yang}) that the time complexity for computing the estimator $\widehat f^\lambda_n$ (also called kernel ridge regression estimator) is $O(n^3).$ On the other hand, the empirical projection estimator, which requires a vector-matrix multiplication, has an  $O(n^2)$ complexity.

\begin{center}
\begin{table}[h]
\vskip 0.02cm\hspace*{4cm}
\begin{tabular}{ccccc} \hline
 $n$ &$\|f-\widehat f_{c,n} \|$&$\|f-\widehat f_{N,n} \|$& $n$  & $\|f-\widehat f^\lambda_{n} \|$ \\   \hline
$100$  &$1.64\mbox{e}-01$  & $1.32\mbox e-01$ &$50$  &$1.37\mbox e-01$ \\
$500$  &$6.48\mbox e-02$  & $6.07\mbox e-02$ &$100$ &$7.39\mbox e-02$ \\
$1000$  &$4.78\mbox e-02$  & $3.49\mbox e-02$&$150$ &$5.67\mbox e-02$  \\
\hline
\end{tabular}
\caption{$L^2(I)-$regression errors corresponding to example 1.}
\label{table1}
\end{table}
\end{center}
\begin{figure}[h]\hspace*{1cm}
 {\includegraphics[width=15.05cm,height=4.5cm]{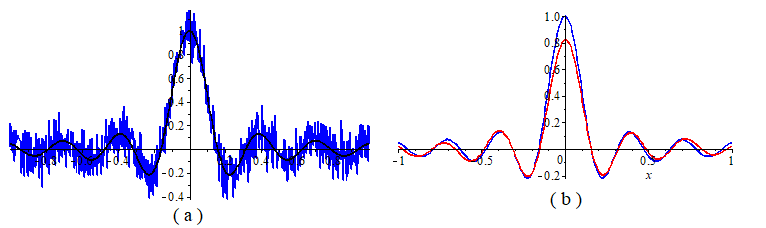}}
 \vskip -0.5cm\hspace*{1cm} \caption{(a) Plots of $f$ (black) and $\widetilde f$ (blue) $\quad$ (b) Plots of $f$ (blue) and its approximation
 $\widehat f_{c,n}$ (red)}
\label{figure1}
 \end{figure} 
 
 \noindent {\bf Example 2:} In this example, we illustrate the accuracy of our three estimators $\widehat f_{c,n},$ $\widehat f_{N,n}$ and $\widehat f^\lambda_{n}$ in the case of a regression function belonging to the Sobolev space $H^s(I).$ We consider for $f$ the Brownian motion  function  $g^s(x)$ given by:
 \begin{equation}\label{eq5.1}
 g^s(x)= \sum_{k\geq 1} \frac{X_k}{k^s} \cos(k\pi x),\quad -1\leq  x\leq 1,
 \end{equation}
 where $s$ is a positive real number and the $X_k$'s are standard Gaussian random variables. It is well known that  $g^s \in H^s(I).$ The random noise terms $\eta_i$ are taken as $\eta_i=0.1 Z_i$, with $Z_i$ distributed as a standard normal, as in example 1. We consider two values for $s$, namely $s=1, 2.$ Then, we calculate $\widehat f_{c,n},$ $\widehat f_{N,n}$ and $\widehat f^\lambda_{n},$  with $c=30,$ $N=20,$ $n=100, 500, 1000$ for the first two estimators and $n=50, 100, 150$ for  $\widehat f^\lambda_{n}.$ For this last estimator, the regularization parameter, chosen by  GCV, is equal to 0.01.
We simulate 500 samples and we obtain the average (over the 500 samples) $L^2(I)$-regression error for each estimator. These results are reported in Table \ref{table2}. In Figure \ref{figure2}, we plot the graphs of $g^s$ and of its noised version  $\widetilde g^s,$ for $s=1.$ We also plot  the estimator $\widehat g^s_{c,n}$ with $n=1000$ and $c=30.$
    \begin{center}
 \begin{table}[h]
 \vskip 0.2cm\hspace*{3cm}
 \begin{tabular}{cccccc} \hline
    $s$ &$n$&$\| g^s - \widehat g^s_{c,n}\|$&$\| g^s - \widehat g^s_{N,n}\|$&$n$& $\| g^s - \widehat g^{s,\lambda}_n\|$  \\   \hline
   $1$  &$100$ &$4.91\mbox e-01$ &$4.85\mbox e-01$ & $50$ & $4.72 \mbox e-01$     \\
          &$500$ &$3.66\mbox e-01$ &$3.60\mbox e-01$ & $100$& $3.13 \mbox e-01$     \\
          &$1000$&$3.73\mbox e-01$ &$3.47\mbox e-01$ & $150$& $2.88 \mbox e-01$     \\
    $2$ &$100$ &$3.03\mbox e-01$ &$2.90\mbox e-01$ & $50$ & $1.64 \mbox e-01$     \\
          &$500$ &$1.38\mbox e-01$ &$1.30\mbox e-01$ & $100$& $8.34 \mbox e-02$     \\
          &$1000$&$9.80\mbox e-02$ &$9.18\mbox e-02$ & $150$& $6.48 \mbox e-02$     \\
  \hline
  \end{tabular}
  \caption{$L^2(I)-$regression errors corresponding to example 2.}
 \label{table2}
 \end{table}
 \end{center}
 
\begin{figure}[h]\hspace*{1cm}
 {\includegraphics[width=15.05cm,height=5.0cm]{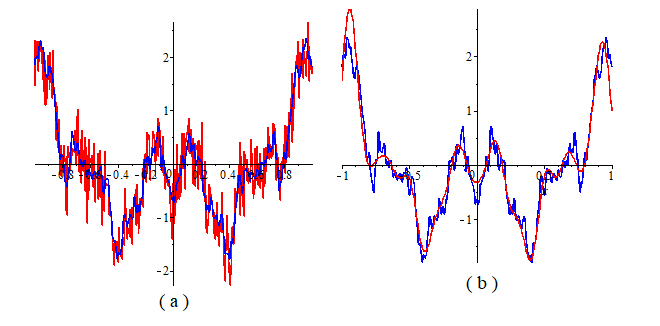}}
 \vskip -0.5cm\hspace*{1cm} \caption{(a) Plots of $g^s$ (blue) and $\widetilde g^s$ (red) $\quad$ (b) Plots of $g^s$ (blue) and its approximation  $\widehat g^s_{c,n}$ (red)}
\label{figure2}
 \end{figure}

\noindent
{\bf Example 3:} In this  example, we illustrate the estimates (given in Proposition \ref{2-condtion.prop}) of the $2$-condition numbers of the  Gram matrices. For this purpose, we consider the Gram random matrix $G^c_{0,n}$ and its regularized version $G^c_{\lambda,n}$ given by \eqref{GramMatrices}, with $\lambda= 1 \mbox e-04$ and various  values for $c$ and $n$ ($c=30, \, 50$ and $n=50, 75$). We compute highly accurate values of the mean over $10$ realizations, for the $2$-condition numbers $\kappa_2(G^c_{0,n})$ and $\kappa_2(G^c_{\lambda,n})$. We also calculate the theoretical upper bound of $\kappa_2(G^c_{\lambda,n})$, given by \eqref{Estimate2}. Numerical results are given in Table \ref{tableau3} and are consistent with the theoretical results stated in Proposition \ref{2-condtion.prop}.
\begin{center}
\begin{table}[h]
\vskip 0.2cm\hspace*{3cm}
\begin{tabular}{ccccc} \hline
 $c$  &$n$&$\kappa_2(G^c_{0,n})$&$\kappa_2(G^c_{\lambda,n})$&$1+\frac{1}{\lambda}\left(1+\frac{c}{\pi \sqrt{n}}\right)$ \\   \hline
$30$  &$50$     &$1.081148\mbox e+60$  & $1.104737\mbox e+04$ & $2.350574\mbox e+04$    \\
      &$75$     &$2.046118\mbox e+117$ & $1.020951\mbox e+04$ & $2.102758\mbox e+04$  \\
$50$  &$50$     &$2.086150\mbox e+39$  & $1.568333\mbox e+04$ & $3.250891\mbox e+04$    \\
      &$75$     &$1.059255\mbox e+89$ &  $1.344763\mbox e+04$ & $2.837863\mbox e+04$  \\
\hline
\end{tabular}
\caption{Numerical results for example 3.}
\label{tableau3}
\end{table}
\end{center}


\begin{thebibliography}{99}

\bibitem{Batir} N. Batir,  Inequalities for the gamma function. {\it  Arch. Math.} 2008; { 91}: 554--563.

\bibitem{Bonami-Karoui4} A. Bonami and A. Karoui, Random Discretization of the Finite Fourier Transform and Related Kernel Random Matrices, (2018)
availbale at https://arxiv.org/abs/1703.10459.

\bibitem{Bonami-Karoui2} A. Bonami and A. Karoui, Spectral Decay of Time and Frequency Limiting Operator, {\it Appl. Comput. Harmon. Anal.} 	{\bf 42} (2017), 1--20.

\bibitem{Bonami-Karoui3}  A. Bonami and A. Karoui, Approximation in Sobolev spaces by Prolate Spheroidal Wave Functions,
 {\it Appl. Comput . Harmon. Anal.,}  {\bf 42} (2017), 361--377.

\bibitem{BJK} A. Bonami, P. Jaming  and A. Karoui, Non-Asymptotic Behaviour of the Sinc-Kernel Operator and Related Applications,
 available at arXiv:1804.01257, (2018).
 
 \bibitem{Canuto}  C. Canuto and A. Quarteroni,  Approximation Results for Orthogonal Polynomials  in Sobolev Spaces,
 {\it Math. Comp.,} {\bf 38} (157) (1982), 67--87.
 
\bibitem{Hsu1} L. H. Dicker, D. P. Foster and D. Hsu,  Kernel methods and regularization techniques
 for nonparametric regression: Minimax  optimality and adaptation,  Technical report, Rutgers University, (2015).
 
 \bibitem{Dieuleveut} A. Dieuleveut and F. Bach,  Nonparametric stochastic approximation with large step-sizes, {\it Ann. Stat.,}
 {\bf 44} (4) (2016), 1363--1399.
 
 \bibitem{Hsu}  D. Hsu, S. M. Kakade and T. Zhang, Random design analysis of ridge regression,
{\it  Found. Comput. Math.,} {\bf 14} (3) (2014), 569--600.
 
 \bibitem{JKS}  P. Jaming, A. Karoui, S. Spektor, The approximation of almost time- and band-limited
functions by their expansion in some orthogonal polynomials bases, {\it J. Approx. Theory,} {\bf 212} (2016), 41--65.


\bibitem{NIST} F.W. Olver, D.W. Lozier, R.F.  Boisvert,
C.W. Clark, { NIST Handbook of Mathematical Functions,}  Cambridge University Press, New York,  2010.

\bibitem{Pinelis} I. Pinelis, An approach to inequalities for the distributions of infinite-dimensional martingales,  In: Dudley R.M., Hahn M.G., Kuelbs J. (eds) Probability in Banach Spaces, 8: Proceedings of the Eighth International Conference. Progress in Probability, {\bf  30}
 Birkh\"auser, Boston, MA, 1992, 128--134.

\bibitem{Rosasco} L. Rosasco, M. Belkin and E. De Vito, On Learning with Integral Operators, {\it J. Mach. Learn. Res.} {\bf 11} (2010), 905--934.

\bibitem{Shawe} J. Shawe-Taylor, K. I. Williams, N.  Cristianini and J. Kandola, On the Eigenspectrum of the Gram Matrix and the Generalized Error of Kernel-PCA, {\it IEEE Trans. Inf. Theory,} {\bf 51} no. 7, (2005), 2510--2522.

\bibitem{Shin} H. Shin and S. Lee, An RKHS approach to robust functional linear regression, {\it Stat.  Sinica,} {\bf 26} (2016), 255--272.


\bibitem{Slepian1}D. Slepian, H. O. Pollak, Prolate spheroidal wave functions, Fourier analysis and uncertainty I, Bell System Tech. J. 40 (1961), 43-64.

\bibitem{Smale1} S. Smale and D. X. Zhou, Shannon sampling II: Connections to learning theory,  {\it Appl. Comput. Harmon. Anal.} 	{\bf 19} (2005), 285--302.

\bibitem{Smale2} S. Smale and D. X. Zhou, Learning Theory Estimates via Integral Operators and Their Approximations, {\it Constructive Approximation}

{\bf 26} (2) (2007), 153--172.

\bibitem{Tarres} P. Tarr\`es and  Y. Yao,  Online learning as stochastic approximation of regularization paths: Optimality and almost-sure convergence,
{\it IEEE T. Inform. Theory,}  {\bf 60} (9) (2014), 5716--5735.

\bibitem{Vito} E. De Vito, A.  Caponnetto and L.  Rosasco,  Model Selection for Regularized Least-Squares Algorithm in Learning Theory,
{\it  Found. Comput. Math.,} {\bf 5} (2005),  59--85.

\bibitem{Yang} Y. Yang, M. Pilanci and M. J. Wainwright,  Randomized sketches for kernels: Fast and optimal non-parametric regression,
{\it Ann. Statist.,} {\bf 45} (3) (2017), 991--1023.

\bibitem{Yuan} M. Yuang and T. T. Cai, A Reproducing Kernel Hilbert Space Approach to Functional Linear Regression, {\it  Ann. Stat., } {\bf 38} (6) (2010), 3412--3444.


\end{thebibliography}
\end{document}